\newcommand{\R}{\mathbb R}
\newcommand{\Z}{\mathbb Z}
\newcommand{\hil}{\mathcal{H}}
\newcommand{\para}{\partial^{\alpha}}
\newcommand{\px}{\partial_x}
\newcommand{\pxmu}{\partial_x^{-1}}
\newcommand{\pxd}{\partial_x^2}
\newcommand{\pxt}{\partial_x^3}
\newcommand{\pxc}{\partial_x^4}
\newcommand{\py}{\partial_y}
\newcommand{\pyd}{\partial_y^2}
\newcommand{\pyt}{\partial_y^3}
\newcommand{\pyc}{\partial_y^4}
\newcommand{\p}{\partial}
\newcommand{\al}{{\alpha}}
\newtheorem{theorem}{Theorem}[section]
\newtheorem{remark}[theorem]{Remark}
\newtheorem{lemma}[theorem]{Lemma}
\newtheorem{corollary}[theorem]{Corollary}
\newtheorem*{TA}{Theorem A}
\newtheorem*{TB}{Theorem B}
\newtheorem*{TC}{Theorem C}
\numberwithin{equation}{section}
\begin{document}
\title[Propagation of  regularity of the KPII equation]{On the propagation of regularity of solutions of the Kadomtsev-Petviashvilli (KPII) equation}
\author{Pedro Isaza}
\address[P. Isaza]{Departamento  de Matem\'aticas\\
Universidad Nacional de Colombia\\ A. A. 3840, Medellin\\Colombia}
\email{pisaza@unal.edu.co}
\author{Felipe Linares}
\address[F. Linares]{IMPA\\
Instituto Matem\'atica Pura e Aplicada\\
Estrada Dona Castorina 110\\
22460-320, Rio de Janeiro, RJ\\Brazil}
\email{linares@impa.br}

\author{Gustavo Ponce}
\address[G. Ponce]{Department  of Mathematics\\
University of California\\
Santa Barbara, CA 93106\\
USA.}
\email{ponce@math.ucsb.edu}
\keywords{Kadomtsev-Petviashvilli equation,  propagation of regularity}
\subjclass{Primary: 35Q53. Secondary: 35B05}

\begin{abstract} We shall deduce some special regularity properties of  solutions to the IVP associated to the KPII equation.
Mainly, for datum  $u_0\in X_s(\R^2)$, $s>2$, (see \eqref{XS} below) whose restriction  belongs to $H^m((x_0,\infty)\times\R)$ 
for some $m\in\Z^+,\,m\geq 3,$ and $x_0\in \R$, we  shall prove that the restriction of the corresponding solution $u(\cdot,t)$ 
belongs to $H^m((\beta,\infty)\times\R)$ for any $\beta\in \R$ and any $t>0$. 
\end{abstract}

\maketitle
\section{Introduction}
We consider solutions of the initial value problem (IVP) associated to the  Kadomtsev-Petviashvilli (KPII) equation,
\begin{equation}\label{KP}
\begin{cases}
\partial_tu+\partial^3_xu+\alpha\partial_x^{-1}\partial^2_yu+ u\,\partial_xu=0,\hskip.3cm (x,y)\in \mathbb R^2, \;t>0, \;\alpha=1,\\
u(x,y,0)=u_0(x,y),
\end{cases}
\end{equation}
the operator $\partial_x^{-1}$ is defined via the Fourier transform by
$$
\widehat{\partial_x^{-1}f}(\xi,\eta)=-\frac{i}{\xi}\widehat{f}\,(\xi,\eta).
$$

The KP equations (KPI  ($\alpha=-1$)  and KPII  ($\alpha=1$))  are models for the propagation of long, dispersive,
weakly nonlinear waves which travel predominantly in the $x$ direction, with weak transverse effects. These equations
were derived by Kadomtsev and Petviashvilli \cite{KP-authors} as two-dimensional extensions of the Korteweg-de Vries
equation (see \eqref{kdv} below).  The KP equations have been studied extensively in the last few years in several aspects.  
For an interesting account of KP equations features and open problems we refer the reader to \cite{KS}  (see also \cite{LP}).

Our main purpose in this paper is the study of smoothing properties of solutions of the IVP \eqref{KP}.   
  
Before stating our result we briefly describe the development of the local  well-posedness theory for the IVP \eqref{KP}.  
The first outcome regarding the local  well-posedness of the IVP \eqref{KP}
was given by Ukai in \cite{U} (see also \cite{jcs}, \cite{IMS2}) for initial data in $H^s(\R^2)$, $s\ge3$. In \cite{B}  Bourgain proved local and 
global well-posedness of the IVP \eqref{KP} in $L^2(\mathbb T^2)$ and $L^2(\mathbb R^2)$. Takaoka and Tzvetkov \cite{TT}  and  Isaza and Mej\'{i}a \cite{IM1}  established local well-posedness for data in the anisotropic Sobolev spaces  $H^{s_1,s_2}(\R^2)$, $s_1>-1/3$, $s_2\ge 0$, where
 \begin{equation*}
 H^{s_1,s_2}(\R^2)=\{ f\in \mathcal{S}'(\R^2)\;:\; \|f\|_{H^{s_1,s_2}}=\|\langle \xi\rangle^{s_1}\langle \eta\rangle^{s_2}\widehat{f}\,\|<\infty\},
 \end{equation*}
and $\langle\cdot\rangle^2=1+|\cdot|^2$ (for previous results we refer \cite{Tk1}, \cite{Tz1}, \cite{Tz2}).  Later  Takaoka in \cite{Tk2} proved local well-posedness in $H^{s_1,s_2}(\R^2)$, $s_1>-1/2$, $s_2= 0$, but imposing an additional low frequency condition in the initial data (i.e. $|D_x|^{-\frac12+\epsilon}u_0\in L^2(\R^2)$, for a suitable $\epsilon>0$).  In \cite{H} Hadac removed the latter condition on the initial data and showed local well-posedness for any data in $H^{s_1,s_2}(\R^2)$, $s_1>-1/2$, $s_2\ge 0$. 
Finally, Hadac, Herr and Koch obtained the local well-posedness in the scaling anisotropic Sobolev space $H^{-\frac12, 0}(\R^2)$ for any size data.
They also obtained global well-posedness for small data in the homogeneous anisotropic Sobolev space $\dot{H}^{-\frac12, 0}(\R^2)$ and
local well-posedness in the same space for any size data.  In the anisotropic Sobolev spaces $H^{s_1,s_2}(\R^2)$ the best global result known for any size data was proved by Isaza and Mejia in \cite{IM2} for $s_1>-1/14, \;s_2=0$.  We point out  that the inverse scattering method provides global solution for the KPII equation only  for small initial data (see \cite{W}).
 
In our analysis we will use a result of  Iorio and Nunes \cite{IN} regarding local well-posedness for the KP equations ($\alpha=\pm1$ in \eqref{KP})
and a general nonlinearity $\partial_x F(u)$ in Sobolev spaces $H^s(\R^2)$, $s>2$.  More precisely, we define
\begin{equation}\label{XS}
X_s=\{f\in H^s(\R^2)\;:\; \partial_x^{-1}f\in H^s(\R^2)\}.
\end{equation}

\begin{TA}[\cite{IN}] Let $u_0\in X_s(\R^2)$, $s>2$. There exist $T>0$ and a unique $u=u(x,y,t)$ solution of the IVP
\eqref{KP} such that $u\in C([0,T]; X_s)$. Moreover, the data-solution map is continuous in the $\|\cdot\|_s$--norm.
\end{TA}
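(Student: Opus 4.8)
Because the linear KPII group $e^{-t(\pxt+\al\pxmu\pyd)}$ is unitary on the Sobolev scale — it neither smooths nor compensates for the one-derivative loss in the nonlinearity $u\px u$ — a contraction/Picard scheme cannot be closed in $X_s$. The natural route is the classical energy method, combined with a regularization to justify the formal computations and a Bona--Smith argument to obtain genuine continuity. The plan is therefore: (i) a priori estimates in $X_s$; (ii) construction by parabolic regularization and compactness; (iii) uniqueness; (iv) time-continuity and continuous dependence.

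\textbf{A priori estimates.} First I would derive the $X_s$ bound. Writing $\Lambda^s=\langle D\rangle^s$ and pairing $\Lambda^s$ applied to \eqref{KP} against $\Lambda^s u$ in $L^2(\R^2)$, the dispersive operators $\pxt$ and $\al\pxmu\pyd$ are skew-adjoint and hence contribute nothing to $\tfrac{d}{dt}\|u\|_{H^s}^2$. The only dangerous term is the nonlinearity; splitting $\Lambda^s(u\px u)=u\,\px\Lambda^s u+[\Lambda^s,u]\px u$, one integrates the first piece by parts and controls the commutator by the Kato--Ponce inequality, using that $H^s(\R^2)$ is an algebra for $s>2>1$, to get $\tfrac{d}{dt}\|u\|_{H^s}^2\lesssim \|\px u\|_{L^\infty}\|u\|_{H^s}^2$. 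For the antiderivative component the key simplification is
\[
\pxmu(u\,\px u)=\tfrac12\,\pxmu\px(u^2)=\tfrac12\,u^2,
\]
so $v:=\pxmu u$ solves a KPII equation whose forcing $\tfrac12 u^2$ lies in $H^s$ with no derivative loss; the same pairing (the dispersive terms again dropping out) yields $\tfrac{d}{dt}\|\pxmu u\|_{H^s}^2\lesssim \|u\|_{H^s}^2\,\|\pxmu u\|_{H^s}$. Adding the two estimates and applying Gronwall produces a time $T=T(\|u_0\|_s)>0$ with $\sup_{[0,T]}\|u(t)\|_s\le C\|u_0\|_s$.

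\textbf{Construction and uniqueness.} Next I would make this rigorous via a parabolic regularization, solving $\partial_t u^\mu+\pxt u^\mu+\al\pxmu\pyd u^\mu+u^\mu\px u^\mu=-\mu\,\Delta^2 u^\mu$ for $\mu\in(0,1]$. For fixed $\mu$ the smoothing term turns the problem into an ODE in $X_s$ with locally Lipschitz nonlinearity, giving a smooth local solution $u^\mu$; the energy estimates above are uniform in $\mu$ (the dissipative term only helps), so $\{u^\mu\}$ stays bounded in $L^\infty([0,T];X_s)$ on a $\mu$-independent interval. Bounding $\partial_t u^\mu$ in a space of lower regularity from the equation and invoking the Aubin--Lions lemma (with a local-in-space/diagonal argument on $\R^2$), I extract a subsequence converging to $u\in L^\infty([0,T];X_s)$ solving \eqref{KP}. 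Uniqueness follows from an $L^2$-type estimate for the difference $w=u_1-u_2$ of two solutions with the same datum: the dispersive terms are skew-adjoint and $u_1\px u_1-u_2\px u_2=\tfrac12\px\big((u_1+u_2)w\big)$ is controlled using $\|u_i\|_{H^s}$ with $s>2$, so Gronwall forces $w\equiv0$.

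\textbf{Continuity and main obstacle.} Finally, to upgrade $u\in L^\infty([0,T];X_s)$ to $u\in C([0,T];X_s)$ and to obtain continuity of the data-to-solution map in the $\|\cdot\|_s$-norm, I would run the Bona--Smith argument: approximate $u_0$ by smoother data $u_0^\delta$, use the uniform bounds together with continuity in a weaker norm to show the corresponding solutions converge, and exploit the high-norm continuity of the regularized solutions to transfer strong continuity to the limit. I expect the main difficulty to lie precisely here and in the energy step: since the KPII group offers no smoothing and the nonlinearity loses a derivative, everything hinges on the sharp commutator estimates and on keeping the nonlocal antiderivative $\pxmu u$ controlled throughout the regularization; the Bona--Smith step is what promotes mere boundedness and weak time-continuity to the strong continuity and continuous dependence asserted in the statement.
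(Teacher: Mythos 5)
This theorem is quoted from Iorio--Nunes \cite{IN} and the paper itself contains no proof of it; your proposal reconstructs essentially the method of that reference: $X_s$ energy estimates (Kato--Ponce commutator for the $H^s$ part, with the skew-adjointness of $\partial_x^3+\alpha\partial_x^{-1}\partial_y^2$ and the key identity $\partial_x^{-1}(u\,\partial_x u)=\tfrac12 u^2$ handling the antiderivative component), parabolic regularization with uniform-in-$\mu$ bounds, and a Bona--Smith argument to upgrade to $u\in C([0,T];X_s)$ and continuous dependence. The only loose point is minor: for fixed $\mu$ the regularized problem is not literally an ODE in $X_s$, since the nonlinearity still loses a derivative; one instead solves it as a mild/Duhamel fixed point exploiting the smoothing of the semigroup generated by the dissipative term together with the skew-adjoint dispersive part, after which your argument goes through as stated.
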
 

Our main result reads as follows:

\begin{theorem}\label{th.1} For $T>0$, let $u$ be a solution in $[0,T]$ of equation \eqref{KP} with initial data $u_0 \in X_{s}(\mathbb R^2)$, $s>2$. 
Suppose that for an integer $n\geq 3$ and some $x_0\in \R$, the restriction of $u_0$ to $(x_0,\infty)\times \R$ belongs to $H^n((x_0,\infty)\times \R)$
and $\partial_x^{-1}\partial_y^3 u_0\in L^2((x_0,\infty)\times \R)$.

Then, for any $\nu>0$ and $\epsilon>0$
\begin{equation}\label{main}
\sup_{t\in[0,T]}\,\underset{\alpha_1+\alpha_2\le n}{\sum}\,\,\,\int\limits_{-\infty}^{\infty}\,\,\int\limits_{x_0+\epsilon-\nu t}^\infty(\partial_x^{\alpha_1}\partial_y^{\alpha_2} \,u(x,y,t))^2\,dx\,dy<\infty.
\end{equation}
In particular, for all times $t\in (0,T]$ and for all $a\in\mathbb R$, $u(t)\in H^n( (a,\infty)\times\mathbb R)$.
\end{theorem}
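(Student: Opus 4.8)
The plan is to establish a priori weighted energy estimates for the derivatives $\partial_x^{\alpha_1}\partial_y^{\alpha_2}u$ with $\alpha_1+\alpha_2\le n$, using smooth cutoff weights $\chi=\chi(x)$ of the type $\car$ (vanishing to the left of a prescribed point and equal to $1$ to its right), and, in order to capture the leftward propagation of regularity, letting these weights travel: I would work with $\chi(x+\nu t)$, so that the region under control is exactly $\{x>x_0+\epsilon-\nu t\}$. The argument runs by induction carried simultaneously over all pairs $(\alpha_1,\alpha_2)$ with $\alpha_1+\alpha_2\le n$, ordered so that each estimate uses only gains already obtained. The base case is provided by Theorem A together with the local hypotheses on $u_0$, and at each stage the two dispersive structures of \eqref{KP} produce nonnegative boundary terms that furnish the gain of regularity fed into the next stage.

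Concretely, applying $\partial_x^{\alpha_1}\partial_y^{\alpha_2}$ to \eqref{KP}, multiplying by $\partial_x^{\alpha_1}\partial_y^{\alpha_2}u\,\chi(x+\nu t)$ and integrating over $\R^2$, I expect three favorable contributions. The time term yields $\frac12\frac{d}{dt}\int(\partial_x^{\alpha_1}\partial_y^{\alpha_2}u)^2\chi$ plus the moving-weight term $\frac{\nu}{2}\int(\partial_x^{\alpha_1}\partial_y^{\alpha_2}u)^2\chi'$. The term $\partial_x^3$ gives, after integrating by parts, the local smoothing gain $\frac32\int(\partial_x^{\alpha_1+1}\partial_y^{\alpha_2}u)^2\chi'$ together with a harmless term carrying $\chi'''$. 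The nonlocal term is the novel point: setting $w=\partial_x^{-1}\partial_y\,\partial_x^{\alpha_1}\partial_y^{\alpha_2}u$, so that $\partial_x w=\partial_y\partial_x^{\alpha_1}\partial_y^{\alpha_2}u$, integration by parts first in $y$ and then in $x$ converts $\int(\partial_x^{-1}\partial_y^2\partial_x^{\alpha_1}\partial_y^{\alpha_2}u)\,\partial_x^{\alpha_1}\partial_y^{\alpha_2}u\,\chi$ into $\frac12\int w^2\chi'$, once more a nonnegative quantity. Hence, after integrating in $t$ over $[0,T]$, each level of the induction controls $\sup_t\int(\partial_x^{\alpha_1}\partial_y^{\alpha_2}u)^2\chi$ and the space-time integrals $\int_0^T\!\!\int(\partial_x^{\alpha_1+1}\partial_y^{\alpha_2}u)^2\chi'$ and $\int_0^T\!\!\int w^2\chi'$, provided the right-hand side is controlled.

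The remaining terms — the commutators carrying $\chi'$ and $\chi'''$, and the contributions of the nonlinearity $u\partial_xu$ — are to be absorbed by the induction hypothesis. The nonlinear term is treated by the Leibniz rule: its top-order piece $\int u\,\partial_x^{\alpha_1+1}\partial_y^{\alpha_2}u\,\partial_x^{\alpha_1}\partial_y^{\alpha_2}u\,\chi$ integrates by parts to quantities bounded by $(\|u\|_{L^\infty}+\|\partial_xu\|_{L^\infty})\int(\partial_x^{\alpha_1}\partial_y^{\alpha_2}u)^2(\chi+\chi')$, the $L^\infty$ bounds being supplied by $u\in C([0,T];H^s)$ with $s>2$; the intermediate pieces carry strictly fewer derivatives on at least one factor and are dominated, on the support of $\chi'$, by the lower-level gains. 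I expect the main obstacle to be the anisotropic coupling introduced by the nonlocal operator: when $\alpha_1\ge1$ one has $\partial_x^{-1}\partial_y^2\partial_x^{\alpha_1}\partial_y^{\alpha_2}u=\partial_x^{\alpha_1-1}\partial_y^{\alpha_2+2}u$, which trades one $x$-derivative for two $y$-derivatives and therefore carries the same total order $\alpha_1+\alpha_2+1$ as the dispersive gain; such terms must be reabsorbed rather than merely bounded, which is what forces the careful ordering of the induction over the pairs $(\alpha_1,\alpha_2)$. It is precisely the purely transverse modes $\alpha_1=0$, where $\partial_x^{-1}$ cannot be removed, that require the extra hypothesis $\partial_x^{-1}\partial_y^3u_0\in L^2((x_0,\infty)\times\R)$ to seed the estimates for $w$ (for $\alpha_1=0,\ \alpha_2=2$ one has exactly $w=\partial_x^{-1}\partial_y^3u$). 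A regularization of the data, or a truncation of the weights by bounded cutoffs of the type $\carn$ followed by a limiting argument, will justify the integrations by parts and the finiteness of every integral appearing above.

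Finally, the in-particular claim is immediate from \eqref{main}: given $a\in\R$ and $t\in(0,T]$, choosing $\nu$ large enough that $x_0+\epsilon-\nu t<a$ yields $u(t)\in H^n((a,\infty)\times\R)$.
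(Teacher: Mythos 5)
Your skeleton matches the paper's proof: traveling weights $\chi_{\epsilon,b}(x+\nu t)$, the two smoothing gains $\tfrac32\int(\partial_x\partial^{\alpha}u)^2\chi'$ and $\tfrac12\int(\partial_x^{-1}\partial_y\partial^{\alpha}u)^2\chi'$, an induction over the pairs $(\alpha_1,\alpha_2)$, the special role of $\partial_x^{-1}\partial_y^3u$, and a regularization/limiting argument. But there is a sign error at the linchpin of the induction. With the weight $\chi(x+\nu t)$, $\nu>0$, the moving-weight term enters the identity \eqref{ee1} on the \emph{unfavorable} side: after substituting the equation one gets $\tfrac{d}{dt}\tfrac12\int(\partial^{\alpha}u)^2\chi+\tfrac32\int(\partial_x\partial^{\alpha}u)^2\chi'+\tfrac12\int(\partial_x^{-1}\partial_y\partial^{\alpha}u)^2\chi' = \tfrac{\nu}{2}\int(\partial^{\alpha}u)^2\chi'+\tfrac12\int(\partial^{\alpha}u)^2\chi'''-A_5$, and the term $\tfrac{\nu}{2}\int(\partial^{\alpha}u)^2\chi'$ (the paper's $A_1$) is \emph{not} controlled by the current energy, since $\chi'$ lives on a strip where $\chi$ has no positive lower bound, and it carries the full $|\alpha|$ derivatives. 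The entire mechanism of leftward propagation is that this transport term (and likewise the $\chi'''$ term, via $|\chi_{\epsilon,b}'''|\leq c\,\chi'_{\epsilon/5,b+\epsilon}$) is absorbed, after time integration, by the smoothing gains of \emph{former} cases with adjusted weight parameters: $A_1^{(\alpha_1+1,\alpha_2)}$ is controlled by $\int_0^T[\alpha_1,\alpha_2]'\,dt$ and $A_1^{(\alpha_1-1,\alpha_2+1)}$ by $\int_0^T[\alpha_1,\alpha_2]''\,dt$ (the paper's \eqref{uno}--\eqref{tresu}). Calling this term one of the ``three favorable contributions'' misplaces the key difficulty; your framework could absorb it by the induction hypothesis like the other $\chi'$ commutators, but as written the term sits on the wrong side. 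Relatedly, your account of where $\partial_x^{-1}\partial_y^3u_0\in L^2$ enters is imprecise: the gain $[0,2]''$ from the case $(0,2)$ needs no hypothesis on the data; the hypothesis is needed to run a full energy estimate for the extra case $\alpha=(-1,3)$ (i.e.\ for $\partial_x^{-1}\partial_y^3u$ itself, nonlinear term included), whose smoothing $\int_0^T\int(\partial_y^3u)^2\chi'$ is what controls the transport and $\chi'''$ terms in the case $(0,3)$.

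The genuine gap is your treatment of the intermediate nonlinear terms. You claim they ``carry strictly fewer derivatives on at least one factor and are dominated, on the support of $\chi'$, by the lower-level gains.'' Neither clause holds: these terms are weighted by $\chi$, not $\chi'$, so they are not localized where the gains live, and they contain products of two derivatives each of order $\geq 2$ and of total order equal to the current one --- e.g.\ $\int(\partial_x\partial_yu)^2\partial_x^3u\,\chi$ in the case $(2,1)$, or $\int\partial_y^3u\,\partial_x^2u\,\partial_x\partial_y^3u\,\chi$ in the case $(1,3)$ --- in which no factor is bounded in $L^\infty$ by $u\in C([0,T];H^{2^+})$. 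Handling them requires two tools absent from your plan: the weighted anisotropic Gagliardo--Nirenberg inequality \eqref{GN} (whose proof exploits that the weight depends on $x$ only) together with the factorization $\chi_{\epsilon,b}=\chi_{\epsilon,b}\,\chi_{\epsilon/5,\epsilon}$, and the embedding $W^{2,1}(\R^2)\hookrightarrow L^\infty(\R^2)$ in the form \eqref{18-}. Moreover, these estimates inevitably couple distinct cases of the \emph{same} total order (the bound for $[2,1]$ involves $[1,2]$, that for $[1,2]$ involves $[0,3]$, etc.), so your premise that the induction can be ``ordered so that each estimate uses only gains already obtained'' fails within each order: Gronwall must be applied to sums such as $[2,1]+[1,2]+[0,3]$, and the $W^{2,1}$ embedding is deployed precisely to keep $[0,n]$ out of every other order-$n$ estimate so that the purely transverse case $(0,n)$ can be closed last, its transport term being $[1,n-1]''$ from the immediately preceding same-order case. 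Without \eqref{GN}, \eqref{18-}, and this same-order grouping, the estimate \eqref{V} cannot be established and the proof stalls exactly at order three.
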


\begin{remark} We observe that the condition $\partial_x^{-1}\partial_y^3 u_0\in L^2((x_0,\infty)\times \R)$ is automatically fulfilled if $s\ge 3$.
\end{remark}

\begin {remark} 
From our comments above  and our proof of Theorem \ref{th.1} it will be clear that the requirement $u_0 \in X_s(\R^2)$ in Theorem \ref{th.1} can be
 lowered.
\end{remark}

As a direct consequence of Theorem \ref{th.1} we can deduce


\begin{corollary}\label{cor.1}
Let $\,u\in C(\R : X_s(\R^2))$, $s>2$,  be a solution of the equation in \eqref{KP} described in Theorem A.
 If there exist $ \,m\in\Z^{+},\,m\geq 3,\,\hat t\in\R,\;a\in\R $ such that
\begin{equation*}
u(\cdot,\hat t)\notin H^m((a,\infty)\times \R),
\end{equation*}
then for any $t\in (-\infty,\hat t)$ and any $\beta\in\R$

\begin{equation*}
u(\cdot,t)\notin H^m((\beta,\infty)\times\R).
\end{equation*}
\end{corollary}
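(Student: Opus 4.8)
The plan is to derive Corollary \ref{cor.1} from Theorem \ref{th.1} by a contrapositive argument, exploiting the time-reversal symmetry of the KPII equation. First I would observe that equation \eqref{KP} is invariant under the transformation $(x,y,t)\mapsto(-x,-y,-t)$, i.e. if $u(x,y,t)$ solves \eqref{KP} then $v(x,y,t):=u(-x,-y,-t)$ also solves \eqref{KP}. Indeed, under this change $\partial_t$ and $\partial_x^3$ both pick up a sign, $\partial_x^{-1}\partial_y^2$ is invariant, and the nonlinear term $u\partial_x u$ also reverses sign consistently, so the equation is preserved. This symmetry converts a backward-in-time propagation statement into the forward-in-time statement covered by Theorem \ref{th.1}.

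Next I would argue by contradiction. Suppose the hypothesis of the corollary holds, namely $u(\cdot,\hat t)\notin H^m((a,\infty)\times\R)$ for some $\hat t$, $a$, and $m\geq 3$, but that the conclusion fails: there exist some $t_1<\hat t$ and some $\beta\in\R$ with $u(\cdot,t_1)\in H^m((\beta,\infty)\times\R)$. The goal is to apply Theorem \ref{th.1} with initial time $t_1$ and show that the regularity at time $t_1$ must propagate forward to time $\hat t$, contradicting the failure of regularity there. To fit the framework of Theorem \ref{th.1}, which takes an initial datum at time $0$ and propagates to positive times, I would recenter time by setting the new initial time to be $t_1$; since the solution lies in $C(\R:X_s(\R^2))$, the profile $u(\cdot,t_1)$ is itself an admissible datum in $X_s(\R^2)$, and by assumption its restriction to $(\beta,\infty)\times\R$ belongs to $H^m$. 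One must also check the auxiliary hypothesis $\partial_x^{-1}\partial_y^3 u(\cdot,t_1)\in L^2((\beta,\infty)\times\R)$; this holds because $m\geq 3$ gives three $y$-derivatives of local $L^2$-regularity on the right half-plane, and the $X_s$ membership together with $s>2$ controls the $\partial_x^{-1}$ factor, so the auxiliary condition is automatic exactly as noted in the Remark following Theorem \ref{th.1}.

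Having verified the hypotheses, Theorem \ref{th.1} applied on the time interval $[t_1,\hat t]$ yields that for every $\nu>0$ and $\epsilon>0$,
\begin{equation*}
\sup_{t\in[t_1,\hat t]}\,\underset{\alpha_1+\alpha_2\le m}{\sum}\,\int_{-\infty}^{\infty}\int_{\beta+\epsilon-\nu(t-t_1)}^\infty(\partial_x^{\alpha_1}\partial_y^{\alpha_2}u(x,y,t))^2\,dx\,dy<\infty,
\end{equation*}
and in particular, by the ``In particular'' clause of Theorem \ref{th.1}, $u(\cdot,t)\in H^m((c,\infty)\times\R)$ for every $c\in\R$ and every $t\in(t_1,\hat t]$. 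Taking $t=\hat t$ and $c=a$ gives $u(\cdot,\hat t)\in H^m((a,\infty)\times\R)$, which directly contradicts the standing hypothesis. Therefore no such $t_1$ and $\beta$ can exist, which is precisely the desired conclusion.

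The main obstacle I anticipate is bookkeeping the time-shift and the symmetry so that the statement of Theorem \ref{th.1}—phrased for data at time $0$ and forward propagation—applies cleanly to an arbitrary initial time $t_1$; concretely one must confirm that the local theory of Theorem A and the propagation mechanism of Theorem \ref{th.1} are genuinely translation-invariant in $t$ over the whole existence interval, and that the global-in-time hypothesis $u\in C(\R:X_s)$ supplies a solution defined on all of $[t_1,\hat t]$. A secondary technical point is verifying the auxiliary condition $\partial_x^{-1}\partial_y^3 u(\cdot,t_1)\in L^2((\beta,\infty)\times\R)$ rather than assuming it; but since $m\geq 3$ this follows from the same reasoning as the Remark, so it should not present a real difficulty.
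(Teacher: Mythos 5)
Your core argument---the contrapositive plus a time translation, taking $u(\cdot,t_1)$ as a fresh datum in $X_s(\R^2)$ and propagating its half-plane $H^m$ regularity forward to time $\hat t$ via Theorem \ref{th.1} to contradict $u(\cdot,\hat t)\notin H^m((a,\infty)\times \R)$---is exactly the derivation the paper intends when it presents the corollary as a ``direct consequence'' of Theorem \ref{th.1} (the paper supplies no further detail). One remark on your opening paragraph: the time-reversal symmetry you invoke is never actually used in your argument, and it could not be used here, since the map $(x,y,t)\mapsto(-x,-y,-t)$ sends right half-planes $(a,\infty)\times\R$ to left half-planes $(-\infty,-a)\times\R$, about which Theorem \ref{th.1} says nothing. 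The corollary genuinely is a forward-propagation statement read contrapositively; that paragraph should simply be deleted.

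The one substantive weak point is your verification of the auxiliary hypothesis $\partial_x^{-1}\partial_y^3\,u(\cdot,t_1)\in L^2((\beta,\infty)\times\R)$. The Remark after Theorem \ref{th.1} asserts automaticity only when $s\ge 3$, whereas the corollary only provides $s>2$; and half-plane membership $u(\cdot,t_1)\in H^m((\beta,\infty)\times\R)$ gives $\partial_y^3 u(\cdot,t_1)\in L^2$ there but does not control $\partial_x^{-1}\partial_y^3\,u(\cdot,t_1)$ on the half-plane, because $\partial_x^{-1}$ (Fourier multiplier $-i/\xi$) is nonlocal in $x$, while $\partial_x^{-1}u(\cdot,t_1)\in H^s$ with $s>2$ falls short of three $y$-derivatives. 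So your claim that ``the $X_s$ membership together with $s>2$ controls the $\partial_x^{-1}$ factor'' does not hold as stated. To be fair, the paper's own statement of the corollary silently passes over the same point (it imposes no such condition at the earlier time), so your write-up mirrors an imprecision already present in the paper; but a fully rigorous proof along your lines would need either to assume $s\ge 3$, to add this integrability condition to the hypotheses, or to argue it separately.
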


 Next, one has  that for appropriate class of data  singularities of the corresponding solutions travel with infinite speed to the left in the $x$-variable as time evolves.

\begin{corollary}\label{cor.2}
Let $\,u\in C(\R: X_s(\R^2))$, $s>2$, be a solution of the equation in \eqref{KP} described in Theorem A.
 If there exist  $ \,k, m \in\Z^{+}$ with  $k\ge m$ and $a,\, b\in \R$ with $\,b<a$ such that
\begin{equation}\label{A}
u_0\in H^k((a,\infty)\times\R)\;\;\;\text{but}\;\;\; u_0\notin H^m((b,\infty)\times\R),
\end{equation}
then for any $t\in(0,\infty)$ and any $v>0$ and $\epsilon>0$
\begin{equation*}
\underset{\alpha_1+\alpha_2\le k}{\sum}\; \:\int\limits_{-\infty}^{\infty}\,\,\,\int\limits_{a+\epsilon-vt}^{\infty} |\partial_x^{\alpha_1}\partial_y^{\alpha_2}\,u(x,y,t)|^2\,dxdy <\infty,
\end{equation*}
and for any $t\in(-\infty,0)$ and  $\gamma\in\R$
\begin{equation*}
\underset{\alpha_1+\alpha_2\le m}{\sum}\; \;\int\limits_{-\infty}^{\infty}\int\limits_{\gamma}^{\infty} |\partial_x^{\alpha_1} \partial_y^{\alpha_2}\,u(x,y,t)|^2\,dxdy =\infty.
\end{equation*}
\end{corollary}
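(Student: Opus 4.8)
The strategy is to read Corollary~\ref{cor.2} as two independent consequences of the results already established: the first (forward) estimate is Theorem~\ref{th.1} applied to the hypothesis $u_0\in H^k((a,\infty)\times\R)$, while the second (backward) statement is Corollary~\ref{cor.1} applied to the hypothesis $u_0\notin H^m((b,\infty)\times\R)$. Since Corollary~\ref{cor.1} requires its regularity index to be at least $3$, these integer statements are understood with $k\ge m\ge 3$. The full strength of \eqref{A} (namely $b<a$ together with $k\ge m$) only serves to produce a coherent picture; each displayed conclusion uses a single one of the two memberships in \eqref{A}.

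For the forward estimate, fix $t\in(0,\infty)$, $v>0$ and $\epsilon>0$, and set $T=t$ (legitimate since $u$ is global). I would apply Theorem~\ref{th.1} with $x_0=a$, $n=k$ and $\nu=v$. Of its hypotheses, $u_0\in X_s(\R^2)$ and the local membership $u_0|_{(a,\infty)\times\R}\in H^k((a,\infty)\times\R)$ are exactly those granted by \eqref{A}. The only remaining hypothesis is $\partial_x^{-1}\partial_y^3 u_0\in L^2((a,\infty)\times\R)$; by the Remark following Theorem~\ref{th.1} this is automatic when $s\ge 3$, and in the range $2<s<3$ it is verified by combining the global information $\partial_x^{-1}u_0\in H^s(\R^2)$ coming from $u_0\in X_s$ with the local higher regularity of $u_0$ on $(a,\infty)\times\R$. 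With the hypotheses in place, the conclusion \eqref{main} of Theorem~\ref{th.1}, read on $[0,T]$ with $x_0+\epsilon-\nu t=a+\epsilon-vt$, is precisely the first displayed inequality of Corollary~\ref{cor.2} (indeed it gives the stronger supremum over $[0,T]$).

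For the backward statement, I would invoke Corollary~\ref{cor.1} with $\hat t=0$ and with its parameter $a$ replaced by $b$. Its hypothesis $u(\cdot,\hat t)\notin H^m((a,\infty)\times\R)$ becomes $u(\cdot,0)=u_0\notin H^m((b,\infty)\times\R)$, which is the second half of \eqref{A}. Its conclusion then asserts that for every $t\in(-\infty,0)$ and every $\beta\in\R$ one has $u(\cdot,t)\notin H^m((\beta,\infty)\times\R)$. Unwinding the definition of $H^m((\beta,\infty)\times\R)$, this failure of membership is exactly the statement that the finite sum $\sum_{\alpha_1+\alpha_2\le m}\int_{-\infty}^\infty\int_\beta^\infty|\partial_x^{\alpha_1}\partial_y^{\alpha_2}u(x,y,t)|^2\,dx\,dy$ diverges; renaming $\beta$ as $\gamma$ yields the second displayed identity.

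The main obstacle is not conceptual but one of bookkeeping: the genuine work lies in checking the auxiliary hypothesis $\partial_x^{-1}\partial_y^3 u_0\in L^2((a,\infty)\times\R)$ in the low-regularity window $2<s<3$, where the nonlocal operator $\partial_x^{-1}$ prevents one from reading this off the local $H^k$ regularity directly and forces one to exploit the global bound $\partial_x^{-1}u_0\in H^s(\R^2)$. The substantive analytic content behind the backward direction---namely that regularity travelling to the right as $t$ increases forces, under the time reversal $u(x,y,t)\mapsto u(-x,y,-t)$ of KPII, an obstruction propagating with infinite speed to the left for $t<0$---is already encapsulated in Corollary~\ref{cor.1}, so here it may simply be cited.
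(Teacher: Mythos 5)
Your reduction is exactly the paper's intended argument: the paper offers no separate proof of Corollary \ref{cor.2}, presenting it as a direct consequence of Theorem \ref{th.1} (the forward half, applied with $x_0=a$, $n=k$, $\nu=v$, $T=t$, using global existence) and of Corollary \ref{cor.1} at $\hat t=0$ with its parameter $a$ replaced by $b$ (the backward half). Both of your displayed identifications, and your observation that each conclusion uses only one of the two memberships in \eqref{A} with $k\ge m\ge 3$ understood, match the paper.

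The one genuine flaw is your claim that in the window $2<s<3$ the auxiliary hypothesis $\partial_x^{-1}\partial_y^3u_0\in L^2((a,\infty)\times\R)$ of Theorem \ref{th.1} ``is verified by combining'' the global bound $\partial_x^{-1}u_0\in H^s(\R^2)$ with the local $H^k$ regularity of $u_0$. You assert this, defer it as ``the genuine work,'' and never do it --- and it does not in fact follow from the stated information. Writing $w=\partial_x^{-1}u_0$ and, for $x>a'>a$,
\begin{equation*}
\partial_y^3 w(x,y)=\partial_y^3 w(a',y)+\int_{a'}^x\partial_y^3u_0(x',y)\,dx',
\end{equation*}
the integral term is indeed controlled by the local $H^k$ regularity (since $k\ge3$), but the trace term $\partial_y^3w(a',\cdot)$ is three $y$-derivatives of the trace of an $H^s(\R^2)$ function on a line, hence only in $H^{s-7/2}(\R_y)$; for $s<3$ this is a negative-order distribution with no reason to lie in $L^2$, and the nonlocality of $\partial_x^{-1}$ prevents the local regularity of $u_0$ on the right half-plane from repairing it. The paper handles this point differently: the condition is carried as an explicit hypothesis of Theorem \ref{th.1}, the Remark after the theorem notes it is automatic only when $s\ge3$, and the corollaries tacitly operate under that understanding (the analogous condition on $u(\cdot,t)$ is likewise tacitly needed when Corollary \ref{cor.1} is deduced from Theorem \ref{th.1}, but that gloss is the paper's, not yours). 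So either restrict your argument to $s\ge3$ or import $\partial_x^{-1}\partial_y^3u_0\in L^2((a,\infty)\times\R)$ as a standing hypothesis; as written, your patch for $2<s<3$ is a gap.
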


\begin{remark}\hskip10pt

{\rm(a)}  If in Corollary \ref{cor.2} in addition to  \eqref{A} one assumes that
$$
\underset{\alpha_1+\alpha_2\le k}{\sum}\; \;\int\limits_{-\infty}^{\infty}\int\limits_{-\infty}^b |\partial_x^{\alpha_1}\partial_y^{\alpha_2}\,u_0(x,y)|^2\,dxdy<\infty,
$$
then by combining the results in this corollary  with the group properties
it follows that
\begin{equation*}
\underset{\alpha_1+\alpha_2\le m}{\sum}\; \;\int\limits_{-\infty}^{\infty} \int\limits_{-\infty}^{\beta} |\partial_x^{\alpha_1}\partial_y^{\alpha_2}\,u(x,y,t)|^2\,dx =\infty, \hskip10pt \text{for any}\hskip 5pt \beta\in\R\hskip5pt\text{and}\hskip5pt t>0.
\end{equation*}
This shows that the regularity in the left hand side does not propagate forward in time.

{\rm(b)} Notice that \eqref{main}  tells us that the local regularity of the initial datum $u_0$ described in the statement of Theorem \ref{th.1} propagates with infinite speed to its left in the $x$-variable
 as time evolves.

{\rm(c)} In \cite{ILP1} we proved the corresponding result concerning the IVP for the $k$-generalized Korteweg-de Vries equation

\begin{equation}\label{kdv}
\begin{cases}
\partial_t u+\partial_x^3u+u^k\,\partial_x u=0, \hskip10pt x,t\in\R, \;\;k\in \Z^{+},\\
u(x,0)=u_0(x).
\end{cases}
\end{equation}

More precisely,
\begin{TB}\label{propa-kdv}
If  $u_0\in H^{{3/4}^{+}}(\R)$ and for some $\,l\in \Z^{+},\,\;l\geq 1$ and $x_0\in \R$
\begin{equation}\label{notes-3}
\|\,\partial_x^l u_0\|^2_{L^2((x_0,\infty))}=\int_{x_0}^{\infty}|\partial_x^l u_0(x)|^2dx<\infty,
\end{equation}
then the solution of the IVP \eqref{kdv} provided by the local theory satisfies  that for any $v>0$ and $\epsilon>0$
\begin{equation}\label{notes-4}
\underset{0\le t\le T}{\sup}\;\int^{\infty}_{x_0+\epsilon -vt } (\partial_x^j u)^2(x,t)\,dx<c,
\end{equation}
for $j=0,1, \dots, l$ with $c = c(l; \|u_0\|_{{3/4}^{+},2};\|\,\partial_x^l u_0\|_{L^2((x_0,\infty))} ; v; \epsilon; T)$.

In particular, for all $t\in (0,T]$, the restriction of $u(\cdot, t)$ to any interval $(x_1, \infty)$ belongs to $H^l((x_1,\infty))$.

Moreover, for any $v\geq 0$, $\epsilon>0$ and $R>0$ 
\begin{equation}\label{notes-5}
\int_0^T\int_{x_0+\epsilon -vt}^{x_0+R-vt}  (\partial_x^{l+1} u)^2(x,t)\,dx dt< c,
\end{equation}
with  $c = c(l; \|u_0\|_{_{{3/4}^{+},2}};\|\,\partial_x^l u_0\|_{L^2((x_0,\infty))} ; v; \epsilon; R; T)$.
\end{TB}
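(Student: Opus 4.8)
The plan is to derive \eqref{notes-4} and \eqref{notes-5} from \emph{weighted energy estimates} carried out with smooth cut-off weights that travel to the left in $x$ as $t$ increases, organized as an induction on the order of the derivative. Since the solution supplied by the local theory has only $H^{{3/4}^{+}}$ regularity, all the formal manipulations below will first be performed for smooth solutions (equivalently, for solutions issuing from regularized data $u_{0,n}\to u_0$), so that every integration by parts is legitimate; the constants produced will depend only on the quantities displayed in the statement, and a limiting argument based on the continuous dependence furnished by the local theory will then transfer the bounds to $u$ itself.

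For $0<\epsilon<R$ introduce a nondecreasing weight $\chi=\chi_{\epsilon,R}\in C^{\infty}(\R)$ with $\chi\equiv 0$ on $(-\infty,\epsilon]$, $\chi\equiv 1$ on $[R,\infty)$, $\chi'\ge 0$ supported in $[\epsilon,R]$, and set $\psi(x,t)=\chi(x-x_0+vt)$, so that $\psi\equiv 0$ for $x\le x_0+\epsilon-vt$, $\psi\equiv 1$ for $x\ge x_0+R-vt$, and $\psi_t=v\,\partial_x\psi\ge 0$. Applying $\partial_x^{j}$ to the equation in \eqref{kdv}, writing $w=\partial_x^{j}u$, multiplying by $2w\psi$ and integrating in $x$, the dispersive term yields the identity
\begin{equation*}
\frac{d}{dt}\int w^2\psi\,dx+3\int(\partial_x w)^2\,\partial_x\psi\,dx=\int w^2\big(\psi_t+\partial_x^3\psi\big)\,dx-2\int \partial_x^{j}\big(u^k\partial_x u\big)\,w\,\psi\,dx .
\end{equation*}
Because $\partial_x\psi\ge 0$, the second term on the left is a \emph{gain of one derivative} localized to the moving strip $\{x_0+\epsilon-vt\le x\le x_0+R-vt\}$; integrating in time, with $\chi$ chosen so that $\chi'\ge c>0$ on $[\epsilon,R]$, it produces exactly the local smoothing bound \eqref{notes-5} at the top level $j=l$.

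The induction runs over $j=0,1,\dots,l$ with a \emph{nested} family of weights (parameters $\epsilon_0>\epsilon_1>\dots>\epsilon$) so that the bump $\partial_x\psi$ used at level $j$ is supported where the level $j-1$ supremum bound already holds. For $j=0$ the identity reduces to the weighted $L^2$ estimate: the term $\int u^2(\psi_t+\partial_x^3\psi)$ is bounded by $\|u(t)\|_{L^2}^2$, the nonlinearity becomes $\tfrac{2}{k+2}\int u^{k+2}\,\partial_x\psi$, controlled by $\|u\|_{\infty}^{k}\int u^2\,\partial_x\psi$ with $\|u\|_{\infty}\lesssim\|u\|_{H^{{3/4}^{+}}}$, and integration in $t$ gives \eqref{notes-4} for $j=0$ (the datum term $\int u_0^2\psi(\cdot,0)$ is finite since $u_0\in L^2$). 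At a general level the terms $\int w^2(\psi_t+\partial_x^3\psi)$ are supported in the strip where, by the previous step, $\int(\partial_x^{j}u)^2$ is already under control, so they are harmless after a Gronwall argument.

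The genuine difficulty is the nonlinear contribution $-2\int\partial_x^{j}(u^k\partial_x u)\,w\,\psi$. Expanding by Leibniz produces a sum of products of $k+1$ factors whose derivative orders add to $j+1$. The extreme term $u^k\partial_x w$ is integrated by parts to transfer the derivative onto $w\psi$, giving $\int u^k w^2\,\partial_x\psi$ (absorbed into the good term, or bounded by $\|u\|_\infty^k$ times the smoothing term) and a term carrying $\partial_x u$ that is controlled through $\|u\|_\infty$ together with the lower-level local smoothing for $\partial_x u$. Every remaining summand involves only factors of order $\le j$: estimating the lowest-order factors in $L^\infty$ via the global bound $\|u(t)\|_{H^{{3/4}^{+}}}\lesssim\|u_0\|_{H^{{3/4}^{+}}}$ and the middle-order factors by interpolation between that bound and the space-time local smoothing estimates inherited from the earlier steps, each summand is dominated by $\int w^2\psi$ plus a small multiple of the good term $\int(\partial_x w)^2\partial_x\psi$. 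Integrating in $t$ and applying Gronwall's inequality then closes the induction, giving \eqref{notes-4} at level $j+1$ and, at $j=l$, the bound \eqref{notes-5}. I expect the main obstacle to be precisely the control of these middle-order nonlinear factors---verifying that the smoothing gained at each level is exactly enough to feed the estimate at the next---together with the justification of the whole scheme at the low regularity $H^{{3/4}^{+}}$, which is what forces the approximation argument mentioned above.
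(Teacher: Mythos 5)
Your proposal reproduces, in outline, exactly the method behind this statement. One point of orientation first: the present paper does not prove Theorem B --- it is quoted from \cite{ILP1} as motivation --- so the benchmark here is the scheme of \cite{ILP1}, which this article mirrors in two dimensions in its proof of Theorem \ref{th.1}: traveling cut-offs $\chi(x-x_0+vt)$, the energy identity whose good term $3\int(\partial_x w)^2\,\partial_x\psi\,dx$ yields the Kato-type smoothing \eqref{notes-5}, an induction on the derivative order with nested weight parameters so that the time-integrated smoothing gained at level $j-1$ controls the ``bad'' terms $\int w^2(\psi_t+\partial_x^3\psi)$ at level $j$ as an integrable function $g(t)$ feeding Gronwall, and finally a regularization of the data plus continuous dependence to justify the integrations by parts. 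All of this matches the paper's strategy, and your identification of the nonlinear middle-order terms as the crux is correct.

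There is, however, one concrete step where your sketch would fail as written: the factor $\partial_x u$. After integrating the extreme term $u^k\partial_x w\,w\,\psi$ by parts --- and already in the first nontrivial case $j=l=1$, $k=1$, where the nonlinear contribution contains $\int(\partial_x u)^3\psi\,dx$ --- closing Gronwall requires $\int_0^T\|\partial_x u(t)\|_{L^\infty_x}\,dt<\infty$. You propose to bound lowest-order factors in $L^\infty$ ``via the global bound $\|u(t)\|_{H^{{3/4}^+}}$'', but $H^{{3/4}^{+}}(\R)$ does not control $\|\partial_x u\|_{L^\infty}$; neither does the inherited local smoothing (which gives only an $L^2$ space--time bound for one extra derivative on a strip), nor interpolation between the two. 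In \cite{ILP1} this coefficient is supplied by the Strichartz-type estimate of the Kenig--Ponce--Vega local theory, $\partial_x u\in L^4([0,T]:L^\infty(\R))$, available precisely for data in $H^{{3/4}^{+}}$ --- this is why the hypothesis carries that exponent --- and it must be held uniformly along your approximating solutions for the limiting argument to work, so it belongs on the list of quantities the constants depend on. With that ingredient added, your induction closes; without it, it cannot start. (A minor cosmetic point: a smooth $\chi$ cannot satisfy $\chi'\ge c>0$ on all of $[\epsilon,R]$ with $\operatorname{supp}\chi'\subseteq[\epsilon,R]$; as in the construction \eqref{2.2}--\eqref{2.5}, the lower bound holds on a slightly smaller interval, which is harmless given your nested parameters.)
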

\end{remark}

\begin{remark}
For solutions of the IVP associated to the Benjamin-Ono equation,
that is,
\begin{equation}\label{BO}
\begin{cases}
\partial_tu-\hil\partial_x^2u+u\partial_x u=0,\hskip.5cm x\in\R, \;t>0,\\
u(x,0)=u_0(x),
\end{cases}
\end{equation}
where $\hil$ denotes the Hilbert transform, we also showed a similar property (see \cite{ILP2}).
\end{remark}

\begin{remark}  In \cite{ILP1} we obtained the following result.
\begin{TC}\label{decay-kdv}
If $u_0\in H^{{3/4}^{+}}(\R)$ and for some $\,n\in \Z^{+},\;n\geq 1$,
\begin{equation}\label{b1}
\|\,x^{n/2} u_0\|^2_{L^2((0,\infty))}=\int_0^{\infty}| \,x^n|\,|u_0(x)|^2dx<\infty,
\end{equation}
then the solution $u$ of the IVP \eqref{kdv} provided by the local theory satisfies
 that
\begin{equation}\label{b2}
\underset{0\le t\le T}{sup}\; \int_0^{\infty} |x^n|\,|u(x,t)|^2\,\,dx\le c
\end{equation}
with $c=c(n;\|u_0\|_{{3/4}^{+},2};\|\,x^{n/2} u_0\|_{L^2((0,\infty))} ; T)$.

Moreover,  for any $\epsilon, \delta, R >0, v\geq 0$, $m,\;j\in\Z^+$, $\;m+j\le n$, $m\ge 1$,
\begin{equation}\label{b3}
\begin{split}
&\underset{\delta \le t\le T}{sup} \;\int_{\epsilon-vt}^{\infty} (\partial_x^mu)^2(x,t)\,x_+^{j}\,dx\\
&\hskip15pt +
\int_{\delta}^T \int_{\epsilon-vt}^{R-vt} (\partial_x^{m+1}u)^2(x,t)\,x_+^{j-1}\,dxdt\le c,
\end{split}
\end{equation}
with $\,c=c(n;\|u_0\|_{{3/4}^{+},2};\|\,x^{n/2} u_0\|_{L^2((0,\infty))} ; T;\delta; \epsilon; R; v)$.
\end{TC}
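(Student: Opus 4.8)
The plan is to derive both \eqref{b2} and \eqref{b3} from weighted energy estimates, organized as an induction that trades powers of the spatial weight for gains of derivatives and that exploits the Kato smoothing effect carried by the third-order dispersion in \eqref{kdv}. All computations are first carried out for smooth, rapidly decaying solutions, where every integration by parts is legitimate; the essential point is that the resulting constants depend \emph{only} on the quantities listed in the statement. Since $3/4^+>1/2$, the embedding $H^{3/4^+}(\R)\hookrightarrow L^\infty(\R)$ furnishes an a priori bound on $\sup_{[0,T]}\|u(t)\|_{L^\infty_x}$, which is the sole control of the nonlinearity needed at the base level. The general $H^{3/4^+}$ solution is then recovered by approximating $u_0$ by smooth data and invoking the continuity of the data-to-solution map.

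The basic tool is the following identity. Let $\theta=\theta(x,t)\ge 0$ be a smooth weight with $\theta'\ge 0$, recentered at the moving point $\epsilon-vt$ so that $\partial_t\theta=v\,\theta'$, and set $w=\partial_x^m u$. Multiplying the $m$-times differentiated equation by $w\,\theta$ and integrating by parts, the dispersive term produces
\begin{equation*}
\frac12\frac{d}{dt}\int w^2\,\theta\,dx+\frac32\int(\partial_x w)^2\,\theta'\,dx=\frac12\int w^2\bigl(v\,\theta'+\theta'''\bigr)\,dx-\int\partial_x^m\!\bigl(u^k\,\partial_x u\bigr)\,w\,\theta\,dx.
\end{equation*}
The gain term $\tfrac32\int(\partial_x^{m+1}u)^2\theta'$ is exactly the space-time quantity in \eqref{b3}, since bounded approximations of $x_+^{\,j}$ satisfy $\theta'\sim x_+^{\,j-1}$, while the terms carrying $v\,\theta'$ and $\theta'''$ involve strictly lower powers of the weight and are absorbed by the induction.

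For the base case $m=0$ one takes $\theta$ to be a bounded, smooth, increasing approximation of $x_+^{\,n}$ (time-independent, so $v=0$). The nonlinear contribution reduces to $-\tfrac1{k+2}\int u^{k+2}\theta'\,dx$, which is dominated by $\|u\|_{L^\infty}^{k}\int u^2\theta'\,dx$, a weighted $L^2$ norm of strictly lower order; an induction on $n$ together with a Gronwall argument closes the estimate, and letting the truncation parameter tend to infinity by monotone convergence yields \eqref{b2}, valid on all of $[0,T]$. Crucially, the same identity already produces the space-time bound $\int_0^T\!\!\int(\partial_x u)^2 x_+^{\,n-1}\,dx\,dt<\infty$, the first instalment of the smoothing cascade. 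For the inductive step I would order the estimates by the total budget $m+j\le n$: assuming control of every weighted norm of strictly smaller budget, one expands $\partial_x^m(u^k\partial_x u)$ by Leibniz. The single top term $\int u^k\,\partial_x^{m+1}u\,w\,\theta$ is integrated by parts into $-\tfrac12\int u^k\,\theta'\,w^2\,dx-\tfrac{k}{2}\int u^{k-1}\partial_x u\,w^2\,\theta\,dx$; the first piece is a strictly-lower-budget weighted norm controlled directly by the hypothesis and $\|u\|_{L^\infty}^k$, whereas the second, together with all remaining Leibniz terms, carries intermediate derivatives of $u$ and is treated as described below. Integrating in time over $[\delta,T]$ and retaining the positive smoothing term then produces \eqref{b3}.

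The hard part will be the control of the \emph{intermediate} nonlinear terms, those in which between one and $m$ derivatives fall on $u$: the low base regularity $H^{3/4^+}$ does \emph{not} bound $\partial_x u,\dots,\partial_x^{m-1}u$ in $L^\infty$, so a crude $L^\infty$ estimate is unavailable and such terms cannot be absorbed na\"ively. The resolution is to feed back the local smoothing already established at lower budgets—the finite space-time integrals $\int_\delta^T\!\!\int(\partial_x^{m'+1}u)^2 x_+^{\,j'}\,dx\,dt$—and to interpolate them, via a Gagliardo–Nirenberg inequality on the moving region, against the weighted sup-in-time norms, thereby producing a finite $L^1_t$ multiplier that, through Gronwall, closes the same-budget estimate. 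This is precisely why \eqref{b3} requires $t\ge\delta>0$, since the lower-order smoothing integrals are controlled only away from $t=0$. Making this feedback quantitative, uniformly in the truncation of the weight and in the moving center $\epsilon-vt$, and with the bookkeeping that no term ever exceeds the budget $m+j\le n$, is the delicate point; once it is arranged the induction closes, and the passage to the unbounded weight $x^n$ and to the rough solution follows by the limiting arguments indicated above.
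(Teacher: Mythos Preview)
Theorem C is not proved in this paper. It is quoted from reference \cite{ILP1} (Isaza--Linares--Ponce, \emph{On the propagation of regularity and decay of solutions to the $k$-generalized Korteweg--de Vries equation}), introduced by the sentence ``In \cite{ILP1} we obtained the following result.'' The present paper concerns the KPII equation and only cites Theorem~C as background on the KdV case; there is therefore no proof here against which to compare your proposal.

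That said, your outline is broadly the right strategy and matches the spirit of the argument in \cite{ILP1}: weighted energy identities for $\partial_x^m u$ against moving weights approximating $x_+^j$, an induction on the total budget $m+j\le n$, the Kato smoothing term $\int(\partial_x^{m+1}u)^2\theta'$ feeding the next step, and a regularization-and-limit to pass from smooth to $H^{3/4^+}$ data. Your identification of the genuine difficulty---the intermediate Leibniz terms, which cannot be handled by $\|\partial_x^\ell u\|_{L^\infty}$ at regularity $3/4^+$---is on point, and the remedy you propose (recycling lower-budget smoothing integrals as an $L^1_t$ coefficient in Gronwall, together with a localized Gagliardo--Nirenberg interpolation) is exactly how that obstacle is overcome. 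What you have written is a credible sketch rather than a proof: the bookkeeping that each intermediate term really lands at a strictly smaller budget, the uniformity in the weight truncation, and the precise interpolation inequality on the moving half-line all need to be spelled out. But for a comparison with \emph{this} paper there is nothing further to say, since the proof you seek lives in \cite{ILP1}.
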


In \cite{KS} (p.783) Klein and Saut gave
an example showing that initial data in the Schwartz class do not necessarily lead to solutions of the KPII equation in the  Schwartz class.
On the other hand, Levandovsky in \cite{L} showed that for initial data $u_0$ satisfying 
\begin{equation}\label{extra-smoothing}
\int\limits_{\R^2} \big\{u_0^2+(\partial_x^3u_0)^2+(\partial^{-1}_x\partial_y u_0)^2+ x_+^L u_0^2+ x_+^L( \partial_x u_0^2)\big\}\,dxdy<\infty
\end{equation}
for all integer $L\ge 0$,  where $x_{+}=\max\{0,\, x\}$, there exists a unique solution of the IVP \eqref{KP} $u(t)\in C^{\infty}(\R^2)$ for
$t\in(0,T)$.
\end{remark}

We shall notice that solutions of the IVP \eqref{KP} also share a smoothing property similar to the one proved by
Kato (\cite{K}) for solutions of the KdV equation (see \cite{jcs}).

For the generalized KPII equation i.e. 
\begin{equation}\label{gKP}
\partial_tu+\partial^3_xu+\partial_x^{-1}\partial^2_yu+u^p\partial_xu=0\hskip3mm p\in \mathbb Z^{+},\;\;p>1,
\end{equation}
it may be possible to obtain similar results as those in Theorem \ref{th.1}.

This paper is organized as follows. In Section 2, we introduce some tools that will be employed in the proof of Theorem \ref{th.1}.
Section 3 will be devoted to the proof of Theorem \ref{th.1}.

\section{preliminaries}

Our argument of proof uses weighted energy estimates. In this case we will employ weights independent of the variable $y$.
More precisely, for each $\epsilon>0$ and $b\ge 5\epsilon$ we define a function
$\chi_{_{\epsilon, b}} \in C^{\infty}(\R)$ with  $\;\chi_{_{\epsilon, b}}'(x)\ge0$, and
\begin{equation}
\label{2.1}
\chi_{\epsilon,b}(x)=\begin{cases}
0,\hskip10pt x\le \epsilon,\\
1, \hskip 10pt x\ge b,
\end{cases}
\end{equation}
which will be constructed as follows. Let $\rho\in C^{\infty}_0(\R)$, $\rho(x)\geq 0$, even, with $\,\text{supp} \,\rho\subseteq(-1,1)$ and $\,\int\,\rho(x)dx=1$ and define
\begin{equation}\label{2.2}
\nu_{_{\epsilon,b}}(x)=\begin{cases}
0,\hskip75pt x\le 2\epsilon,\\
\\
\frac{1}{b-3\epsilon} x-\frac{2\epsilon}{b-3\epsilon},\hskip15pt x\in[2\epsilon,b-\epsilon],\\
\\
1, \hskip 75pt x\ge b-\epsilon,
\end{cases}
\end{equation}
with
\begin{equation}
\label{2.3}
\chi_{_{ \epsilon, b}}(x)=\rho_{\epsilon}\ast \nu_{\epsilon,b}(x)
\end{equation}
where $\rho_{\epsilon}(x)=\epsilon^{-1}\rho(x/\epsilon)$. Thus
\begin{equation}
\label{2.4}
\begin{split}
& \text{supp}\;\chi_{_{ \epsilon, b}}\subseteq [\epsilon,\infty),\\
& \text{supp}\; \chi_{_{ \epsilon, b}}'(x)\subseteq [\epsilon, b].
\end{split}
\end{equation}
If $\;x\in(3\epsilon, b-2\epsilon)$, then
\begin{equation}
\label{2.5}
\chi_{_{\epsilon, b}}'(x)\geq \frac{1}{b-3\epsilon}.
\end{equation}
and for any $\,x\in \R$
\begin{equation}
\label{2.7}
\chi_{_{\epsilon, b}}'(x) \leq \frac{1}{b-3\epsilon}.
\end{equation}
We will frequently use the following facts
\begin{equation}\label{CL}
\begin{split}
& \chi_{_{\epsilon/5,\epsilon}}(x)=1,\;\;\;\;\;\text{ on supp } \,\chi_{_{\epsilon,b}},\\
&\chi_{_{\epsilon,b}}''(x)   \leq c\,\chi_{_{\epsilon/5,b+\epsilon}}(x).
\end{split}
\end{equation}

Throughout the article we will apply the following inequality of Gagliardo-Nirenberg's type:
\begin{lemma}
 Let $f=f(x,y)$ be a function such that $f\chi\in H^1(\mathbb R^2)$, where $\chi=\chi(x)=\chi_{_{\epsilon, b}}$ is as above. Then,
\begin{equation}\label{GN} 
\Bigl( \int\limits_{\R^2} f^4\chi^2 \Bigr)^{1/2}\leq c\int\limits_{\R^2} f^2\chi+c\int\limits_{\R^2} (\partial_xf)^2\chi+c\int\limits_{\R^2} (\py f)^2\chi
+c\int\limits_{\R^2} f^2\chi'\,.
\end{equation}

\end{lemma}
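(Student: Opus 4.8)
The plan is to establish \eqref{GN} by the anisotropic Gagliardo--Nirenberg technique: I will carry out one--dimensional Sobolev--type estimates separately in the $x$ and $y$ directions and then combine them. The essential structural feature I intend to exploit is that the weight $\chi=\chi(x)$ depends only on $x$ and is supported in $[\epsilon,\infty)$. This allows me to keep exactly one power of $\chi$ inside every derivative term and thereby avoid ever differentiating $\chi^{1/2}$; differentiating $\chi^{1/2}$ would produce the factor $(\chi')^2/\chi$, which is \emph{not} controllable by $\chi'$ (near the left edge $\chi$ vanishes while $\chi'/\chi$ blows up), so it is precisely the choice of weight that makes the inequality work.

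First I would work in the $x$ variable. Since $f^2\chi$ vanishes for $x\le\epsilon$, the fundamental theorem of calculus gives $f(x,y)^2\chi(x)=\int_{-\infty}^x\big(2f\,\px f\,\chi+f^2\chi'\big)(s,y)\,ds$, and hence, taking the supremum in $x$ and splitting $\chi=\chi^{1/2}\chi^{1/2}$ before Cauchy--Schwarz (and using $\chi'\ge0$),
\[
\sup_x f^2(x,y)\chi(x)\le 2\Big(\int_{\R}f^2\chi\,ds\Big)^{1/2}\Big(\int_{\R}(\px f)^2\chi\,ds\Big)^{1/2}+\int_{\R}f^2\chi'\,ds=:G_1(y).
\]
Next I would treat the $y$ variable. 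Because $\chi$ is independent of $y$, it passes through the $y$--derivative, and the same one--dimensional argument, now with no weight--derivative term, yields
\[
\sup_y f^2(x,y)\chi(x)\le 2\Big(\int_{\R}f^2\chi\,dt\Big)^{1/2}\Big(\int_{\R}(\py f)^2\chi\,dt\Big)^{1/2}=:G_2(x).
\]

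Then I would combine the two bounds through the pointwise inequality $f^4\chi^2\le G_1(y)G_2(x)$, integrate over $\R^2$, and apply Cauchy--Schwarz once more in the remaining free variable. Writing $P=\int_{\R^2}f^2\chi$, $Q=\int_{\R^2}(\px f)^2\chi$, $R=\int_{\R^2}(\py f)^2\chi$ and $S=\int_{\R^2}f^2\chi'$, this gives
\[
\int_{\R^2}f^4\chi^2\le\Big(\int_{\R}G_1\,dy\Big)\Big(\int_{\R}G_2\,dx\Big)\le\big(2P^{1/2}Q^{1/2}+S\big)\,2P^{1/2}R^{1/2}.
\]
Taking square roots and applying Young's inequality to each of the two resulting monomials $P^{1/2}Q^{1/4}R^{1/4}$ and $S^{1/2}P^{1/4}R^{1/4}$ produces $\big(\int_{\R^2}f^4\chi^2\big)^{1/2}\le c\,(P+Q+R+S)$, which is exactly \eqref{GN}; note that the constant $c$ obtained this way is absolute, in particular independent of $\epsilon$ and $b$.

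The only genuinely delicate points are the validity of the fundamental--theorem--of--calculus steps and the finiteness of the integrals, which I would handle by first assuming that the right--hand side of \eqref{GN} is finite (otherwise there is nothing to prove) and then using the hypothesis $f\chi\in H^1(\R^2)$ together with $\mathrm{supp}\,\chi\subseteq[\epsilon,\infty)$: the support condition makes $f^2\chi$ vanish identically for $x\le\epsilon$, so the lower limit in the $x$--representation contributes nothing, while $f\chi\in H^1$ forces $f(x,\cdot)\in H^1(\R_y)$ for a.e.\ $x$ in $\{\chi>0\}$, which supplies the decay $f(x,y)\to0$ as $y\to-\infty$ needed for the $y$--representation. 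Everything beyond this is the routine interplay of Cauchy--Schwarz and Young's inequality; the main conceptual obstacle is not any individual estimate but the recognition, emphasized above, that the weight must be carried as $\chi$ and never as $\chi^{1/2}$ through the derivatives, so that the weight--derivative contribution enters linearly as $\int_{\R^2}f^2\chi'$ rather than as a singular quotient.
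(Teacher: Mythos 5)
Your proof is correct and is essentially the paper's own argument: the same two one-dimensional fundamental-theorem-of-calculus representations (the $x$-direction producing the $f^2\chi'$ term, the $y$-direction weight-free since $\chi=\chi(x)$), combined via the standard product trick $f^4\chi^2\le G_1(y)G_2(x)$ and finished with Cauchy--Schwarz and Young. The only differences are cosmetic --- you apply Cauchy--Schwarz inside the one-dimensional integrals before integrating out, where the paper applies Young's inequality once at the end --- plus your added (correct) justification of the limiting steps, so no further comment is needed.
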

\begin{proof} It suffices to observe that
\begin{align*}
 f^2(x,y)\chi(x)&\leq\int_{-\infty}^{+\infty}(2|f\partial_xf|\chi+f^2\chi')\,dx\;\quad\text{and}
\\f^2(x,y)\chi(x)&\leq\int_{-\infty}^{+\infty}2|f\partial_yf|\chi\,dy.
\end{align*}
Therefore
\begin{align*}
\iint f^4\chi^2&\leq c\Bigl(\iint(|f\partial_xf|\chi+f^2\chi')\,dx\,dy\Bigr)\Bigl(\iint|f\partial_yf|\chi\,dy\,dx  \Bigr)\,. 
\end{align*}
 In this way, \eqref{GN} follows from Young's inequality.
\end{proof}
\section{proof of Theorem \ref{th.1}}
We begin by giving a brief sketch of the proof. By using a translation in $x$ if necessary we may assume that  $x_0=0$. 
For two integers $\al_1,\al_2$, with $\alpha_1\geq-1$ and $\alpha_2\geq 0$,  let $\al=(\al_1,\al_2)$, $|\al|=\al_1+\al_2$ and $\para=\partial_x^{\al_1}\partial_y^{\al_2}$. We apply $\para$ to  equation \eqref{KP}, multiply by 
$$
 \para u\;\chi\equiv\para u\;\chi_{\epsilon,b}(x+\nu t),
$$
and integrate in $\R^2$. Formally assuming that we have enough regularity to apply integration by parts we obtain that
\begin{equation}\label{ee1}
\begin{split}
\frac12\frac{d\;}{dt}\int(\para u)^2&\chi \,dx\,dy \;\underbrace{-\frac{\nu}2 \int (\para u)^2\chi'\,dx\,dy }_{A_1^\alpha\equiv A_1}
\;\underbrace{ -\frac12 \int(\para u)^2\chi'''\,dx\,dy}_{A_2^\alpha\equiv A_2}\\&
+\frac32\underbrace{\int(\partial_x\para u)^2\chi'\,dx\,dy}_{A_3^\alpha\equiv A_3}+ \frac12\underbrace{\int( \para\partial_x^{-1}\partial_y u)^2\chi' \,dx\,dy}_{A_4^\alpha\equiv A_4}\\&
+\underbrace{\int\para(u\;\partial_xu)\para u\;\chi\,dx\,dy}_{A_5^\alpha\equiv A_5}=0\,.
\end{split}
\end{equation}

In order to write our expressions in a simple form we will use the following notation: for $\alpha=(\alpha_1,\alpha_2)$,
\begin{equation}\label{notation}
\begin{split}
& [\alpha_1,\alpha_2]_{\epsilon,b}\equiv[\alpha_1,\alpha_2]:=\int(\para u)^2\chi_{\epsilon,b}(x+\nu t)\,dx\,dy,\\
&[\alpha_1,\alpha_2]'_{\epsilon,b}\equiv[\alpha_1,\alpha_2]':= A_3^\alpha=\int(\partial_x\para u)^2\chi'_{\epsilon,b}(x+\nu t)\,dx\,dy,\\
&[\alpha_1,\alpha_2]''_{\epsilon,b}\equiv[\alpha_1,\alpha_2]'':= A_4^\alpha=\int( \para\partial_x^{-1}\;\py u)^2\chi'_{\epsilon,b}(x+\nu t)\,dx\,dy.
\end{split}
\end{equation}

When $n\geq 3$, $\chi_{\epsilon,b}(\cdot)u_0\in H^n(\R^2)$ and 
$ \chi_{\epsilon,b}(\cdot)\py^{2}(\px^{-1}\py )u_0\in L^2(\R^2)$,  we will use Gronwall's lemma to show that 
\begin{equation}
\sup_{t\in[0,T]}[\alpha_1,\alpha_2](t)=\sup_{t\in[0,T]}\int(\para u)^2\chi_{\epsilon,b}(x+\nu t)\,dx\,dy\leq C\; \label{gro1}
\end{equation}
 for all indices $\al $ with $3\leq|\al|\leq n$.

By induction we will suppose that \eqref{gro1} is proved for all cases with $|\al|\leq n-1$ and 
 we will refer to a case already proved as a {\it former case}.

For an index $\alpha$ with $|\alpha|=n$,  our procedure will lead to verify that, as a consequence of a former case,
\begin{equation}\begin{aligned}
 \int_0^T|A_1^\alpha(t)|\,dt&\equiv c\int_0^T\int (\para u)^2\chi'\,dx\,dy\,dt\leq C\\
 \text{and }\\
  \int_0^T|A_2^\alpha(t)|\,dt&\equiv c\int_0^T |\int(\para u)^2\chi'''\,dx\,dy|\,dt\leq C.\label{former}
\end{aligned}\end{equation}
Notice that for  $|\alpha|=0,1,2$ with $\alpha_1\geq 0$, inequalities \eqref{gro1} and \eqref{former} follow directly from the well-posedness of the IVP \eqref{KP} with $u_0\equiv u(0)\in H^{2^+}(\R^2)$. Taking into account \eqref{former} and the fact that $A_3^\alpha\geq 0$ and $A_4^\alpha\geq 0$, we will restrict our attention to show that
\begin{equation}
|A_5^\alpha(t)|\equiv\bigl|\int\para(u\;\partial_xu)\para u\;\,\chi\,dx\,dy\bigr|\leq c\int(\para u)^2\chi\,dx\,dy +g(t)\label{V}\,,
\end{equation}
where  $g\geq0 $  is a function with $\int_0^Tg(t)\,dt\leq C$ (sometimes we will mix  several cases together to obtain an inequality similar to \eqref{V}.) We will continue denoting by $g$ a generic nonnegative integrable function on $[0,T]$. 

Once \eqref{V} is obtained,  Gronwall's Lemma  will give \eqref{gro1} for the case $\alpha$ under consideration. Also,  from \eqref{ee1} to \eqref{V} it  will follow that
\begin{equation}\label{smoothing1}
\int_0^T[\alpha_1,\alpha_2]'\,dt\equiv \int_0^TA_3^\alpha(t)\,dt\equiv c\int_0^T\int(\partial_x\para u)^2\chi'\,dx\,dy\,dt\leq C
\end{equation}
 and
 \begin{equation}\label{smoothing2}
\int_0^T\![\alpha_1,\alpha_2]''\,dt\equiv\!\! \int_0^T\! A_4^\alpha(t)\,dt\equiv c\int_0^T\!\!\int(\partial_x^{-1}\partial_y\para u)^2\chi' \,dx\,dy\,dt\leq C\,,
\end{equation}
which guarantees for the case $(\al_1+1,\alpha_2)$ with $\alpha_1\geq -1$ and the case $(\alpha_1-1,\alpha_2+1)$ with $\alpha_1\geq 1$ that
\begin{equation} \label{uno}
\begin{split}
\int_0^T|A_1^{(\alpha_1+1,\alpha_2)}|\,dt&\equiv c\int_0^T\int(\partial_x\para u)^2\chi'\,dx\,dy\,dt\\
&=\int_0^T\!\![\alpha_1,\alpha_2]'\,dt\leq C,
\end{split}
\end{equation}
and
\begin{equation}\label{dos}
\begin{split}\;\int_0^T|A_1^{(\alpha_1-1,\alpha_2+1)}|\,dt&\equiv  c\int_0^T\int(\partial_x^{\alpha_1-1}\partial_y^{\alpha_2+1}u)^2\chi'\,dx\,dy\,dt\\
&=\int_0^T[\alpha_1,\alpha_2]''\,dt\leq C.
\end{split}
\end{equation}
Since $|\chi_{\epsilon,b}'''|\leq c\chi'_{\epsilon/5,b+\epsilon}$, we will  have that
 \begin{equation}
 \int_0^T|A_2^{(\alpha_1+1,\alpha_2)}|\,dt\leq C,\quad\text{and }\;\int_0^T|A_2^{(\alpha_1-1,\alpha_2+1)}|\,dt\leq C.\label{tresu}
\end{equation}
In this way  \eqref{uno}, \eqref{dos}, and \eqref{tresu} will give \eqref{former}  for the cases $(\al_1+1,\alpha_2)$ and $(\alpha_1-1,\alpha_2+1)$.

\vskip4pt
We now begin the proof by considering the cases with $|\alpha|=2$, $\alpha_1\geq 0$. Though the regularity of the solution provides \eqref{gro1} for these cases, we consider them in order to establish the local smoothing effects expressed in \eqref{smoothing1} and \eqref{smoothing2}, which will be used in future cases.
\vspace{5pt}

\noindent\underline{Case (2,0)}:

With $\alpha=(2,0)$, $\partial^\alpha=\partial_x^2$   we estimate the cubic term $A_5$ in \eqref{ee1}. Using integration by parts and Sobolev's embeddings,
\begin{align*}
|A_5|&=|\int\partial_x^2(u\;\partial_xu)\partial_x^2u\;\chi|
=|\int 3\px u(\pxd u)^2\chi+u\;\pxt u\;\pxd u\;\chi|\\
&=| \frac52\int \partial_xu(\partial_x^2u)^2\chi-\frac12 \int u(\partial^2_xu)^2\chi'|
\\
&\leq c(\|\partial_xu\;\|_{L^\infty_{xy}}+\|u\;\|_{L^\infty_{xy}})\|\partial_x^2u\;\|^2_{L^2_{xy}}\leq c\|u\;\|^3_{C([0,T];H^{2^+}(\R^2)}\,  
\end{align*}
Besides,
\begin{equation*}
|A_1|+|A_2|\leq c|\int (\partial_x^2u)^2\chi'|+c|\int(\partial_x^2u)^2\chi'''|\leq c\|u\;\|^2_{C([0,T];H^{2^+}(\R^2)}.
\end{equation*}
 Thus, by integrating  \eqref{ee1} in $[0,T]$, and taking into account that the values of $[2,0]$ at $t=0$ and at $t=T$  are bounded by $c\|u\;\|^2_{C([0,T];H^{2^+}(\R^2)}$, we obtain \eqref{smoothing1}  and \eqref{smoothing2} for the case $(2,0)$, which, according to our notation \eqref{notation},  is
 \begin{equation}
 \int_0^T([2,0]'+[2,0]'')\,dt\leq C.\label{2}
 \end{equation}
 Notice that this estimate provides  \eqref{former} for the future case $\alpha=(3,0)$.
 
 \vspace{5pt}
 \noindent\underline{Case (1,1)}:
 
 With $\alpha=(1,1)$ and $\para=\partial_x\partial_y$, we apply integration by parts to obtain  that
 \begin{equation}
 \begin{split}
 |A_5|&=|\int (2\px\py  u\;\px u+\py u\;\pxd u+u\;\py\pxd u)\px\py u\;\chi|\\
 &=|\int \textstyle{\frac32}\px u(\px\py u)^2\chi-\textstyle{\frac 12} u(\px\py u)^2\chi'+\py u\;\pxd u\;\px\py u\;\chi|\\
 &\leq \|u\;\|^3_{C([0,T];H^{2^+}(\R^2)},
 \end{split}
 \end{equation} 
 and, proceeding as in the former case we have that
  \begin{equation}\int_0^T([1,1]'+[1,1]'')\,dt
  \label{uuno}\leq C,
  \end{equation}
  which gives \eqref{former} for the case $\alpha=(2,1)$.
  
 \vspace{5pt}
 \noindent \underline{Case (0,2)}:
 
 The cubic term $A_5$ with $\alpha=(0,2)$ in  \eqref{ee1}, is treated as the former cases to obtain that
 \begin{equation*}
 |A_5|=|\int \textstyle{\frac12}\px u(\pyd u)^2\chi-\textstyle{\frac12} u\;\,(\pyd u)^2\chi' +2\py u\;\,\px\py u\;\,\pyd u\;\,\chi|\leq C,
 \end{equation*}
 and from this estimate we then have that
  \begin{equation}
 \int_0^T([0,2]'+[0,2]'')\,dt=\int_0^T\int(\px\pyd u)^2\chi'\,dx\,dy\,dt\leq C,\label{ddos}
 \end{equation}
 to be used in the case $\alpha=(1,2)$.
 
 For  the estimations of order $|\alpha|= 3$ we will  need to consider a single case with $\alpha_1=-1$, namely the case (-1,3). \vskip6pt
 
 \vspace{5pt}
 \noindent \underline{Case (-1,3):}
 
For this case  $\partial^{\alpha}=\px^{-1}\pyt$. From integration by parts and Young's inequality it follows that
 \begin{align*}
 |A_5|&=\frac12|\int \pyt u^2\px^{-1}\pyt u\;\,\chi|=\frac12|\int (2u\;\pyt u+6\py u\;\py^2 u)\p_x^{-1}\pyt u\;\,\chi|\\
 &\leq\frac12|-\int\px u(\pxmu \pyt u)^2\chi-\int u(\pxmu \pyt u)^2\chi'|\\&\quad\quad+c\|\partial_yu\;\|_{L^\infty} \int |\pyd u||\pxmu\;\pyt u|\chi\\
 &\leq c\|\partial_xu\;\|_{L^\infty}[-1,3]+c\|u\;\|_{L^\infty}[0,2]''+c[0,2]+c[-1,3]\,\\
 &\leq c+c[-1,3]+g(t)\,.
 \end{align*}
   On the other hand, since $|\chi'''|\leq c \chi'_{\epsilon/5,b+\epsilon}$, we see that in this case
  \begin{equation*}
  |A_1+A_2|\leq c\int(\px^{-1}\pyd u)^3\chi'_{\epsilon/5,b+\epsilon}\,dt\leq c[0,2]''_{\epsilon/5,b+\epsilon}\,.
  \end{equation*}
  In this way, from the above estimates  
  \begin{equation*}\frac{d}{dt}[-1,3]\leq c[-1,3]+g(t)\,,  \end{equation*}
  which gives \eqref{gro1}, \eqref{smoothing1}, and \eqref{smoothing2} for this case.
 \vskip 5pt
 We now turn to the cases with $|\alpha|=3$ . Thus we assume that $u_0$ satisfies the hypotheses in the statement of Theorem \ref{th.1} with $n=3$.

  \vspace{5pt}
 \noindent\underline{Case (3,0)}:
 
 From  integration by parts we see that
 \begin{align*}
 A_5&=\int \pxt u(u\px u)\pxt u\;\,\chi=\int(4\px u\;\pxt u+3\pxd u\;\pxd u+u \;\px^4 u)\,\pxt u\;\,\chi\\
 &={\textstyle{\frac72}}\int\px u(\pxt u)^2\,\chi-{\textstyle{\frac12}}\int u (\pxt u)^2\,\chi'{\textstyle{-{\textstyle{3}}\cdot\frac13}}\int (\pxd u)^3\chi'
 \equiv A_{51}+A_{52}+A_{53}\,.
  \end{align*}
  By Sobolev embeddings
  \begin{equation}\label{4}
  \begin{split}
  |A_{51}|+|A_{52}|&\leq (\|\partial_x u\;\|_{L^\infty}+\|u\;\|_{L^\infty})\int (\pxt u)^2\,\chi+ c\int(\pxt u)^2\,\chi'\\
  &\leq c[3,0]+c[2,0]'.
  \end{split}
  \end{equation}
 The first term on the right hand side of \eqref{4} is the quantity to be estimated while the second term has finite integral in $[0,T]$ by 
 \eqref{2}.
 Now, from integration by parts and Young's inequality
 \begin{equation}\label{6}
 \begin{split} 
 |A_{53}|&=|-2\int \px u\;\,\pxd u\;\,\pxt u\;\,\chi'-\int\px u(\pxd u)^2\chi''|\\
 &\leq
 c\|\partial_x u\;\|_{L^\infty}\bigl(\int(\pxd u)^2\chi'+\int (\pxt u)^2\,\chi' +\int (\pxd u)^2|\chi''|)\\
 &\leq c+c[2,0]'+c,
 \end{split}
 \end{equation}
 which is bounded after integration in $[0,T]$.
 
 Since from the case (2,0), and inequalities \eqref{uno} and \eqref{tresu}  we have that $|A_1|$ and $|A_2|$ have finite integral in $[0,T]$, it follows that \begin{equation*}
 \frac{d}{dt}[3,0]\leq c[3,0]+g(t).
 \end{equation*}
 Therefore, as we have shown in the sketch of our proof, we obtain \eqref{gro1} \eqref{smoothing1}, and \eqref{smoothing2} for the case $(3,0)$. That is 
 \begin{equation*}\sup_{t\in[0,T]}[3,0]\leq C \quad\text{and }\int[3,0]'+[3,0]''\,dt<\infty.\end{equation*}
 
 We will now turn to the cases (2,1), (1,2), and (0,3). As it will be seen,    we need to consider these three cases  together for the application of Gronwall's lemma.

  \vspace{5pt}
 \noindent\underline{Case (2,1):}
 
 We have that
 \begin{align*}|A_5|&=\int\pxd\py (u\px u)\chi\\&=\int(a_1\pxd\py u \,\px u+a_2\px\py u\;\pxd u+a_3\py u\;\,\pxt u+u\;\pxt \py u)\pxd\py u\;\chi\\&\equiv A_{51}+A_{52}+A_{53}+A_{54}.
 \end{align*}
 We apply  Young's inequality and Sobolev embeddings to obtain that
 \begin{equation}\label{7}
 \begin{split}
 |A_{51}+A_{53}|&\leq c\|\partial_xu\;\|_{L^\infty}[2,1]+ c\|\py u\;\|_{L^\infty}([3,0]+[2,1])\\
 &\leq c+c[2,1],
 \end{split}
 \end{equation}
 since (3,0) is a former case and we have already seen that $[3,0]\leq c$.
 
 From integration by parts it follows that
 \begin{equation}\label{7-1}
 \begin{split}
 |A_{54}|&=|-\frac12\int \px u(\pxd\py u)^2\chi-\frac12 \int  u(\pxd\py u)^2\chi'|\\
 &\leq c[2,1]+c[1,1]'.
 \end{split}
 \end{equation}
 
 For  $A_{52}$,  we integrate by parts to conclude that
  \begin{equation*}
 A_{52}=-\frac{a_2}2\int (\px\py u)^2\pxt u\;\chi-\frac{a_2}2\int (\px\py u)^2\pxd u\;\chi' \equiv A_{521}+A_{522}.
 \end{equation*}
 To estimate $A_{521}$ we apply \eqref{GN} and the facts that $\chi_{\epsilon,b}=\chi_{\epsilon,b}\chi_{\epsilon/5,\epsilon}$ and $\chi_{\epsilon/5,\epsilon}^2\leq \chi_{\epsilon/5,\epsilon}$ to conclude that
 \begin{equation}\label{7-2}
 \begin{split}
& |A_{521}|=c| \int (\px\py u)^2\pxt u\;\chi\chi_{\epsilon/5,\epsilon}| \\
 &\leq \Bigl(\int (\px\py u)^4\chi^2\Bigr)^{1/2}\Bigl(\int\!(\pxt u)^2 \chi_{\epsilon/5,\epsilon}^2 \Bigr)^{1/2}\\
 &\leq c[3,0]^{1/2}_{\epsilon/5,\epsilon}\bigl(\!\int(\px\py u)^2\chi\!+\!(\pxd\py u)^2\chi\!+\!(\px\pyd u)^2\chi\!+\!(\px\py u)^2\chi'\bigr)\\
 &\leq c([1,1]+[2,1]+[1,2]+[0,1]')\\
 &\leq c+c[2,1]+c[1,2]+c[0,1]',
 \end{split}
 \end{equation}
 since the cases (3,0) and (1,1) are former cases.  
 
 $A_{522}$ can be treated in a similar manner to obtain that
 \begin{equation}\label{8}
 \begin{split}
 |A_{522}|&\leq \Bigl(  \int(\px\py u)^4(\chi')^2 \Bigr)^{1/2}\Bigl( \int(\pxd u)^2\chi_{\epsilon/5,\epsilon}^2   \Bigr)^{1/2}\\
 &\leq c[2,0]_{\epsilon/5,\epsilon}^{1/2}\bigl( \int (\px\py u)^2\chi'+(\pxd\py u)^2\chi'\\
 &\hskip10pt +(\px\pyd u)^2\chi'+(\px\py u)^2|\chi''|\bigr)\\
 &\leq c([0,1]'+[1,1]'+[0,2]'+[0,1]'_{\epsilon/5, b+\epsilon}),
 \end{split}
 \end{equation}
 since 
 $$
 |\chi''|= |\chi''_{\epsilon,b}|\leq c\,\chi'_{\epsilon/5, b+\epsilon}
 $$ 
 and (2,0) is a former case. 
 
 On the other hand,
 \begin{equation*}
 \begin{split}
 |A_1|+|A_2| &\leq c|\int(\pxd\py u)^2\chi'|+c|\int(\pxd\py u)^2\chi'''|\\
 &\leq c[1,1]'+[1,1]'_{\epsilon/5,b+\epsilon}\,.
 \end{split}
 \end{equation*}
 In this way, gathering the above estimates, and taking into account that the cases (0,1), (0,2), and (1,1)  are former cases we conclude that
 \begin{equation}\label{10}
\frac{d}{dt}[2,1]\leq c[2,1]+c[1,2]+g(t).
\end{equation}

 \vspace{5pt}
 \noindent\underline{Case (1,2):}
\begin{align*}
&|A_5|=\int\px\pyd(u\px u)\px\pyd u\;\,\chi
\\&=\int(a_1\px\pyd u\;\,\px u+a_2\px\py u\;\px\py u+a_3\pyd u\;\pxd u\\&\quad\quad+a_4\py u\;\pxd\py u+u\;\pxd\pyd u)\px\pyd u\;\chi
\\&\equiv A_{51}+A_{52}+A_{53}+A_{54}+A_{55}.
\end{align*}
Integrating by parts in the term $A_{55}$, and proceeding as we did to obtain  \eqref{7} and \eqref{7-1},  we have that 
\begin{equation*}
|A_{51}+A_{54}+A_{55}|\leq  c[1,2]+c([1,2]+[2,1])+ c([1,2]+c[0,2]')\,.
\end{equation*}
Integration by  parts with respect to $y$ shows that $A_{52}=0$. 

For $A_{53}$ we integrate by parts and apply  \eqref{GN} to conclude that
\begin{align*}
&|A_{53}|=\Bigl|-\frac{a_3}2\int (\pyd u)^2\pxt u\;\chi-\frac{a_3}2\int (\pyd u)^2\pxd u\;\chi'\Bigr|\\
&= \Bigl|-\frac{a_3}2\int (\pyd u)^2\;\chi\;\pxt u\;\chi_{\epsilon/5,\epsilon}+\frac {a_3}2\Bigl(\int 2\pyd u\;\px\pyd u\;\px u\,\chi'+\int (\pyd u)^2\px u\;\chi''\Bigr)\Bigr|\\
&\leq c [3,0]_{\epsilon/5,\epsilon}^{1/2}\Bigl(\int(\partial_y^2 u)^4\chi^2   \Bigr)^{1/2} +c\int |\pyd u\;\px\pyd u|\;\chi'+c\int (\pyd u)^2\;\chi'_{\epsilon/5,b+\epsilon}\\
&\leq c([0,2]+[1,2]+[0,3]+[1,1]'')+c([1,1]'' +[0,2]') +c[1,1]''_{\epsilon/5,b+\epsilon}\,.\\
\end{align*}

Also,
\begin{equation*}
|A_1|+|A_2|\leq c\int(\px\pyd u)^2\chi'_{\epsilon/5,b+\epsilon}\leq c[0,2]'.
\end{equation*}
From the above estimates and taking into account that $(0,2)$ and $(1,1)$ are former cases we have that
\begin{equation}\label{12}
\frac{d}{dt}[1,2]\leq c[2,1]+c[1,2]+c[0,3]+g(t).
\end{equation}

 \vspace{5pt}
 \noindent\underline{Case (0,3)}:

\begin{align*}
A_5&=\int \pyt(u\px u)\pyt u\;\chi\\&=\int (\pyt u\px u+3\pyd u\;\px\py u+3\py u\;\px\pyd u+u\;\px\pyt u)\pyt u\;\chi\\
&\equiv A_{51}+A_{52}+A_{53}+A_{54}.
\end{align*}
From Sobolev embeddings and Young's inequality
\begin{equation*}
|A_{51}+A_{53}|\leq c[0,3]+c([1,2]+[0,3]).
\end{equation*}
Applying integration by parts we obtain
\begin{align*}
|A_{52}|&=|\frac32\int\px\pyd u(\pyd u)^2\chi|=|\frac12\int(\pyd u)^3\chi'|\\
&=|\int \py u\;\pyt u\;\pyd u\;\,\chi'|\leq c\int(\pyt u)^2\chi'+c(\pyd u)^2\chi'\\
&\leq c[-1,3]'+c[1,1]''.
\end{align*}
For $A_{54}$ we see that
\begin{align*}
|A_{54}|&=|-\frac12\int\px u(\pyt u)^2\chi-\frac12\int u(\pyt u)^2\chi'|\\
&\leq c[0,3]+c[-1,3]'.
\end{align*}
Also 
\begin{align*}
|A_1|+|A_2|\leq c\int (\pyt u)^2\chi'_{\epsilon/5,b+\epsilon}\leq c[-1,3]'_{\epsilon/5,b+\epsilon}\,.
\end{align*}
In this way  we see that
\begin{equation}
\frac{d}{dt}[0,3]\leq c[0,3]+c[1,2])+g(t),\label{14}
 \end{equation}
 since the cases $(-1,3)$ and $(1,1)$ are former cases.
 
 Hence, from \eqref{10}, \eqref{12}, and \eqref{14} it follows that
 \begin{equation*}\frac{d}{dt}([2,1]+[1,2]+[0,3])\leq c([2,1]+[1,2]+[0,3])+g(t),  \end{equation*}
 which gives \eqref{gro1}, \eqref{smoothing1}, and \eqref{smoothing2} for the three cases $(2,1)$, $(1,2)$, and $(0,3)$ together. 
 
 For the cases with $|\alpha|=4$ we will see that the case $(4,0)$ can be obtained independently of the other cases of the same order.

 \vspace{5pt}
 \noindent \underline{Case (4,0)}:  
 For this case
 \begin{align*}
 |A_5|&=\bigl|\int \pxc u(u\px u)\,\pxc u \chi=\int  (5\pxc u\;\,\px u+10\pxd u\;\,\pxt u+u\;\px^5 u)\pxc u\;\,\chi  \bigr|\\
 &=\bigl|  {\textstyle -\frac52}\int\px u(\pxc u)^2\,\chi-{\textstyle \frac12}\int u(\pxc u)^2\,\chi'+{10}\int\pxd u\;\pxt u\;\pxc u \,\chi\bigr|
\\&
\leq c[4,0]+c[3,0]'+\bigl| \int(\pxd u)^2(\pxt u)^2\chi\bigr| +c[4,0].
 \end{align*}
 
To estimate the last integral term we will  use the notation $\widetilde{\chi}:=\chi_{\epsilon/5,\epsilon}$ and take into account that   ${\chi}_{\epsilon',b'}^2\leq c\chi_{\epsilon',b'}$ and $(\widetilde{\chi}')^2_{\epsilon',b'}\leq c\widetilde{\chi}'_{\epsilon',b'}$ for $0<\epsilon'<b'/5$. We will aslo apply the following Gagliardo-Nirenberg's inequality:
 \begin{equation*}\int f^6\,dx\,dy\leq c\int f^2\,dx\,dy \bigl(\int \bigl((\partial_x f)^2+(\partial_yf)^2\bigr)\,dx\,dy\bigr)^2\,.\end{equation*}
 In this way,
 \begin{align*}& \bigl|\int(\pxd u)^2(\pxt u)^2\chi\bigr|=|-{\textstyle \frac13} \int(\pxd u)^3\pxc u\;\chi-{\textstyle \frac13}\int(\pxd u)^3\pxt u\;\chi'\bigr|
\\&= \frac13\bigl|\int (\pxd u)^3\widetilde{\chi}^3\pxc u\;\chi+\int(\pxd u)^3\widetilde{\chi}^3\pxt u \chi'\bigr|
\\&
\leq c\int(\pxd u\;\widetilde{\chi})^6+c[4,0]+c\int(\pxd u\;\widetilde{\chi})^6 +c[2,0]'\\
&\leq c\Bigl(  \int(\pxd u)^2 \widetilde{\chi}^2  \Bigr)  \Bigl( \int(\pxt u)^2\widetilde{\chi}^2 +(\pxd u)^2(\widetilde{\chi}')^2+(\py\pxd u)^2
\widetilde{\chi}^2\Bigr)^2\,\\&\quad\quad+c[4,0]+c[2,0]'\\ &
\leq c\|u(t)\|_{H^2}([3,0]^2_{\epsilon/5,\epsilon}+ \|u(t)\|^4_{H^2}+[2,1]^2_{\epsilon/5,\epsilon})+c[4,0]+c[2,0]'
\\
&\leq c+  c[4,0]+ c[2,0]'\,\leq c[4,0]+g(t),
 \end{align*}
which together with \eqref{uno} and \eqref{tresu} gives \eqref{gro1} for this case.
  
 We will now  consider the cases (3,1), (2,2), (1,3) and estimate them together.
 
  \vspace{5pt}
 \noindent\underline{Case (3,1):}
 \begin{align*}
 &A_5=\int \pxt\py(u\px u)\pxt\py u\;\chi\\
 &=\int (a_1\pxt\py u\;\,\px u+ a_2\pxd\py u\;\pxd u+a_3  \pxt u\;\px\py u)\pxt\py u\;\chi\\
&\hskip10pt  +\int (a_4\py u\;\pxc u+u\;\px\pxt\py u)\pxt\py u\;\chi\\
 &=A_{51}+A_{52}+A_{53}+A_{54}+A_{55}\,.
 \end{align*}
 Treating the last term in the former integral by integration by parts and proceeding as we did to obtain \eqref{7} and \eqref{7-1} we have that
 \begin{equation*}
 |A_{51}+A_{54}+A_{55}|\leq c[3,1]+c[4,0]+c[2,1]'\leq c+ c[3,1]+c[2,1]'\,.
 \end{equation*}
 For the remaining terms $A_{52}$ and $A_{53}$ we can use inequality \eqref{GN} to obtain that
 \begin{equation}\label{16}
 \begin{split}
&|A_{52}+A_{53}| \leq c\Bigl( \int(\pxd\py u)^4\chi^2  \Bigr)^{1/2}\Bigl( (\pxd u)^4\widetilde{\chi}^2  \Bigr)^{1/2}\\
 &\hskip7pt +c\Bigl((\pxt u)^4\chi^2   \Bigr)^{1/2}\Bigl((\px\py u)^4  \widetilde{\chi}^2 \Bigr)^{1/2}+c[3,1]\\
 &\leq c([2,1]\!+\![3,1]\!+\![2,2]\!+\![1,1]')([2,0]\!+\![3,0]\!+\![2,1]\!+\![1,0]')_{\epsilon/5,\epsilon}\\
 &\hskip7pt+ c([3,0]\!+\![4,0]\!+\![3,1]\!+\![2,0]')([1,1]\!+\![2,1]\!+\![1,2]\!+\![0,1]')_{\epsilon/5,\epsilon}\\
 &\hskip7pt+c[3,1],
 \end{split}
 \end{equation}
 where the subindex $(\epsilon/5,\epsilon)$ in the closing parenthesis means that all terms $[\cdot,\cdot]$  inside the parentheses are to be taken as 
 $[\cdot,\cdot]_{\epsilon/5,\epsilon}$. 
 Since 
 \begin{equation*}([2,0] +[1,0]'+[1,1]+[0,1]')_{\epsilon/5,\epsilon}\leq c\|u\|_{C([0,T];H^2(\mathbb R^2))}
 \end{equation*}
  and noticing the former cases in \eqref{16}, we conclude that
  \begin{align*}
  |A_{52}+A_{53}|&\leq c+c(  [3,1]+[2,2]+[1,1]'+[4,0]+[2,0]')\\& \leq  c+c(  [3,1]+[2,2]+[1,1]'+[2,0]'),
  \end{align*}
  and therefore, from the above estimates for this case,
  \begin{equation}
  |A_5|\leq c[3,1]+c[2,2]+g(t).\label{17}
  \end{equation}
  
 \vspace{5pt}
 \noindent\underline{Case (2,2):}
\vskip4pt
We proceed as in  case (3,1) to obtain  analogous terms $A_{51}$ to $A_{56}$.  We observe as before that
\begin{equation*}
 |A_{51}+A_{55}+A_{56}|\leq c[2,2]+c[3,1]+c[1,2]',
  \end{equation*}
 while for $A_{52}$, $A_{53}$ and $A_{54}$ we see that
 \begin{equation*}
 A_{52}+A_{53}+A_{54}= \int(a_2\pxd\py u\;\px\py u+a_3\px\pyd u\;\pxd u 
 +a_4\pyd u\;\pxt u)\pxd\pyd u\;\chi,
 \end{equation*}
which can be treated by using inequality \eqref{GN}, as we did in  the case (3,1),       to conclude that
\begin{align*}
|A_{52}&+A_{53}+A_{54}|\leq c[2,2]\\
&+
 c([2,1]+[3,1]+[2,2]+[1,1]')([1,1]+[2,1]+[1,2]+[0,1]')_{\epsilon/5,\epsilon}\\
 &+c([1,2]+[2,2]+[1,3]+[0,2]')([2,0]+[3,0]+[2,1]+[1,0]')_{\epsilon/5,\epsilon}
 \\
 &
 +([3,0]+[4,0]+[3,1]+[2,0]')([0,2]+[1,2]+[0,3]+[1,1]'')_{\epsilon/5,\epsilon}\\
 &\leq c+c([3,1]+[2,2]+[1,1]'+[1,3]+[0,2]'+[4,0]+[2,0]').
  \end{align*}

Thus, for the case (2,2),
\begin{equation}
|A_5|\leq c[3,1]+c[2,2]+c[1,3]+g(t)\,.\label {17a}
\end{equation}
 \vspace{5pt}
 \noindent\underline{Case (1,3)}:

We see that
\begin{align*}
A_5=&\int \px\pyt(u\px u)\px\pyt u\\
=&\int(a_1\px\pyt u\;\px u+a_2\pyt u\;\pxd u+a_3\px\pyd u\;\px\py u+a_4\pyd u\;\pxd\py u\\
&\quad\quad+a_5\py u\;\pxd \pyd u+\,u\;\pxd\pyt u)\chi\;\px\pyt u\\
&=A_{51}+\cdots+A_{56}.\\
\end{align*}
As before,
\begin{equation}
|A_{51}+A_{55}+A_{56}|\leq c[1,3]+c[2,2]+c[0,3]'\,.\label{17b}
\end{equation}
The terms $A_{53}$ and $A_{54}$ can be bounded using inequality \eqref{GN} as it was done to obtain \eqref{16} above. In this case we have
\begin{equation*}
\begin{split}
&|A_{53}+A_{54}|\leq c[1,3]\\
&\hskip10pt+c([1,2]+[2,2]+[1,3]+[0,2]')([1,0] +[2,0] +[1,2] +[0,1]' )_{\epsilon/5,\epsilon}\\
&\leq c([0,2]+[1,2]+[0,3]+[1,1]'')_{\epsilon/5,\epsilon}([2,1]+[3,1]+[2,2]+[1,1]').\\
 \end{split}
 \end{equation*}
 Since 
 $$
 ([0,1]'+[1,1]'')_{\epsilon/5,\epsilon}\leq c\|u\|^2_{C([0,T]; \,H^2(\R^2))},
 $$
 it follows that
 \begin{equation}
 |A_{53}+A_{54}|\leq c+ c([2,2]+[1,3]+ [0,2]'+ [3,1]+[1,1]').\label{18--}
 \end{equation}
 We now consider the term $A_{52}$. For this term we will use the embedding $W^{2,1}(\mathbb R^2)\hookrightarrow L^\infty(\mathbb R^2)$, (where $W^{2,1}$ is the classical Sobolev space of $L^1$ functions having derivatives up to second order in $L^1$). More precisely we will use the inequality
 \begin{equation}
 \|f\|_{L^\infty(\mathbb R^2)}\leq c\|\py\px f\|_{L^1(\mathbb R^2)}.\label{18-}
 \end{equation}
 Now,
 \begin{equation}\label{18}
 \begin{split}
 |A_{52}|&=c|\int \pyt u\;\pxd u\;\px\pyt u\;\chi|\leq c\int (\pyt u)^2(\pxd u)^2\;\chi\;\widetilde{\chi}+c[1,3]\\
 &\leq \|(\pxd u)^2\chi\|_{L^\infty_{x,y}}[0,3]_{\epsilon/5,\epsilon}.
 \end{split}
 \end{equation}
 We estimate the $L^\infty$ norm in \eqref{18} by using inequality \eqref{18-} to conclude that
 \begin{equation}\label{20}
 \begin{split}
 \|(\pxd u)^2&\chi\|_{L^\infty_{x,y}}\leq c\int|\px\py[(\pxd u)^2\chi]|\\
 &= c\int|2\pxd\py u\;\pxt u\;\chi+2\pxd u\;\pxt\py u\;\chi+2\pxd u\;\pxd\py u\;\chi'|\\
 &\leq c[2,1]+[3,0]+c[2,0]+c[3,1]+c[1,0]'+c[1,1]'\,\\
 &\leq c[3,1]+g(t).
 \end{split}
 \end{equation} 
 Therefore, from \eqref{17b}, \eqref{18--}, \eqref{18}, and \eqref{20} it follows that
 \begin{equation}
 |A_5|\leq c[3,1]+[2,2]+[1,3]+g(t). \label{22}
 \end{equation}
From the estimates obtained in \eqref{17}, \eqref{17a}, and \eqref{22} for the cases (3,1), (2,2), and (3,3), we conclude that
\begin{equation*}
\frac d{dt}([3,1]+[2,2]+[1,3])\leq c([3,1]+[2,2]+[1,3])+g(t),
\end{equation*} 
and thus we have \eqref{gro1} for the cases (3,1), (2,2), and (1,3).

Notice that we also have the option of treating the case (4,0)  by using inequalities \eqref{GN} or \eqref{18-} to obtain an estimate for the four terms $[4,0]$, $[3,1]$, $[2,2]$, and $[1,3]$ together.

 \vspace{5pt}
 \noindent\underline{Case (0,4):}

In this case
\begin{align*}
A_{5}=\int \pyc(u\px u)\pyc u\;\chi&=\int (\pyc u\;\px u+a_2 \pyt u\;\px\py u+a_3 \pyd u\;\px\pyd u\\
&\hskip10pt+a_4 \py u\;\px\pyt u+u\;\px\pyc u)\pyc u\;\chi\\&
=A_{51}+\dots+A_{55}.
\end{align*}
As usual, after applying integration by parts in $A_{55}$  we find that
\begin{equation*}
|A_{51}+A_{54}+A_{55}|\leq c[0,4]+c[1,3]+c|\int u(\pyc u)^2\,\chi'|\,.
\end{equation*}
Notice that the last term in the former expression is bounded by $c[1,3]'$ which is also a case of order 4. However,  we took precautions to avoid the appearance of the term $[0,4]$ in our bounds for the other cases of order 4. In particular, we achieve that in the case $(1,3)$ with the application of the embedding $W^{2,1}\hookrightarrow L^\infty$  in \eqref{18} and \eqref{20}.

In this way, since the case $(1,3)$ is already a former case, we conclude that
\begin{equation*}
|A_{51}+A_{54}+A_{55}|\leq c[0,4]+g(t).
\end{equation*}
For the terms $A_{52}$ and $A_{53}$ we observe that they have the form $\int\partial^{\gamma}u\;\partial^\beta u\;\pyc u\;\chi$ with $|\gamma|=3$ and $\beta=2$
allowing us to apply inequality \eqref{GN} as we did in the cases above. Thus, taking into account all former cases, we find that
\begin{equation*}
|A_{52}+A_{53}|\leq c[0,4]+g(t),
\end{equation*}
and we can conclude that \eqref{gro1} is valid for this case.

 \vspace{5pt}
 \noindent\underline{Cases with $n\geq 5$:}

The cases with $|\alpha|=n\geq 5$ are easier since, as we saw for $n=4$, we have enough regularity to estimate the terms with second order derivatives by means of the Sobolev embedding \eqref{18-}. 

We group the cases $(n,0),\,(n-1,1),\cdots,(1,n-1)$, estimate the corresponding integrals $[n,0],\,[n-1,1],\cdots,[1,n-1]$ together in a single application of Gronwall's lemma, and then consider the estimation of  $[0,n]$ separately.

If $\alpha=(\alpha_1,\alpha_2)$, with $\alpha_1+\alpha_2=n$ and $\alpha_1\geq 1$, we observe that the expansion of $A_5=\int\para(u\px u)\para u\chi$ consists of a sum of terms of the form
\begin{equation}
A_\beta=\int\partial^\beta u\;\px\partial^\gamma u\;\chi,\quad \text{with } |\beta|+|\gamma|=n.
\end{equation}
Proceeding as in the cases with $|\alpha|=3,4$ we first consider the terms with $|\beta|=1$ or $|\gamma|=0$,  which can be treated by the Sobolev inequality $\|\partial_x u\|_{L^\infty}+|\partial_y u\|_{L^\infty}\leq c$ and Young's inequality. For the term with $|\beta|=0$ ($|\gamma|=n$), $A_{(0,0)}$, we apply as before integration by parts to obtain the bound
\begin{equation}
|A_{(0,0)}|\leq c \|\partial_x u\|_{L^\infty}[\alpha_1,\alpha_2]+c\|u\|_{L^\infty}[\alpha_1-1,\alpha_2]'.\label{24}
\end{equation}
The terms with $|\beta|=2$ ($|\gamma|=n-2$) or with $|\gamma|=1$ ($|\beta|=n-1$) can now be bounded, as we did in the case (1,3), by using the Sobolev embedding \eqref{18-}. Notice that in the estimates of these cases the term $[0,n]$ will not appear.

The intermediate terms with other combinations of $\beta$ and $\gamma$ will have $|\beta|\leq n-2$ and $|\gamma|\leq n-2$ and can be estimated by means of inequality \eqref{GN} to give bounds which always come from former cases. 
In this way, adding all cases under consideration we have that
\begin{equation*}
\frac{d}{dt}([n,0]+[n-1,0]+\cdots [1,n-1])\leq c([n,0]+[n-1,0]+\cdots [1,n-1])+g(t)\,,
\end{equation*}
which gives \eqref{gro1} for these cases.

Now, we proceed to consider the case $(0,n)$ separately. Here, the estimation of the terms $A_{\beta}$ is carried out as in the former cases of order $n$. However, for $A_{(0,0)}$, instead of \eqref{24} we obtain,
 \begin{align*} |A_{(0,0)}|&\leq c \|\partial_x u\|_{L^\infty}[\alpha_1,\alpha_2]+c\|u\|_{L^\infty}\int(\py^n u)^2\,\chi'\\
 &\leq c[\alpha_1,\alpha_2]+[1,n-1]''\,.
 \end{align*}
  Notice that the case $(1,n-1)$ is of order $n$, but is already a former case. Therefore, taking into account that all cases $(n,0),\cdots (1,n-1)$ are former cases we obtain the inequality 
  \begin{equation*}
  \frac{d}{dt}[0,n]\leq [0,n]+g(t),
  \end{equation*}
  thus giving \eqref{gro1} for this case.
  
   To justify the above formal computations we shall follow the following standard argument.
   
    Consider data $u_0^{\tau}=\rho_{\tau} \ast u_0$ with $\rho\in C^{\infty}_0(\R^2)$, $\text{supp}\,\rho\in B_1(0)=\{z\in \R^2: |z|<1\}$, $\;\rho\ge 0$, $\;\;\displaystyle\int\limits_{\R^2}\rho(z)\,dz=1$ and
 \begin{equation*}
  \rho_{\tau}(z)=\frac{1}{\tau^2}\, \rho\big(\frac{z}{\tau}\big), \;\;\tau>0.
 \end{equation*}

For $\tau>0$ consider  the  solutions $u^{\tau}$ of the IVP \eqref{KP} with data $u_0^{\tau}$  where $(u^{\tau})_{\tau>0}\subseteq C([0,T]: H^{\infty}(\R))$.

 Using the continuous dependence of the solution upon the data we have that
\begin{equation}
\label{123}
\sup_{t\in [0,T]}\;\| u^{\tau}(t)-u(t)\|_{s,2}\,\downarrow 0\;\;\;\;\text{as}\;\;\;\;\tau\,\downarrow 0\text{\hskip10pt for \hskip5pt} s>2.
\end{equation}
 Applying our argument  to the smooth solutions $u^{\tau}(\cdot,t)$ one gets that
\begin{equation}\label{124}
\sup_{[0,T]}\,\int\limits_{\R^2}(\partial^{\alpha} u^{\tau})^2\, \chi_{\epsilon,b}(x+vt)\,dxdy \le c_0
\end{equation}
for any $\epsilon>0$, $b\ge 5\epsilon$, $v>0$, $c_0= c_0(\epsilon;b;v)>0$  but independent of $\tau>0$.

Combining \eqref{123} and \eqref{124} and a weak compactness argument one gets that
\begin{equation}\label{125}
\sup_{[0,T]}\, \int\limits_{\R^2} (\partial^{\alpha} u)^2\, \chi_{\epsilon,b}(x+vt)\,dxdy\le c_0
\end{equation}
which is the desired result. 

This completes the proof of Theorem \ref{th.1}.

  \section*{Acknowledgments} P. I. was supported by Colciencias, Fondo nacional de financiamiento para la ciencia, la
tecnolog\'{i}a y la innovaci\'on Francisco Jos\'e de Caldas, project Ecuaciones diferenciales dispersivas y el\'{i}pticas no lineales, code 111865842951.  
F. L. was partially supported by CNPq and FAPERJ/Brazil. G. P. was  supported by a NSF grant  DMS-1101499.

\end{document}